\newtheorem{theorem}{Theorem}[section]
\newtheorem{lemma}{Lemma}[section]
\newtheorem{corollary}{Corollary}[section]
\numberwithin{equation}{section}
\newcommand{\FF}{\mathbb{F}}
\newcommand{\CC}{\mathbb{C}}
\def\ZZ{\mathbb{Z}}
\def\Ind{\mathrm{Ind}} 
\def\GL{\mathrm{GL}}
\def\Irr{\mathrm{Irr}}
\newcommand{\cS}{\mathcal{S}}
\newcommand{\U}{\mathrm{U}}
\newcommand{\Sp}{\mathrm{Sp}}
\newcommand{\epi}{\varepsilon_{\iota}}
\def\cP{\mathcal{P}}
\newcommand{\tPhi}{\tilde{\Phi}}
\newcommand{\bmu}{\boldsymbol{\mu}}
\newcommand{\blam}{\boldsymbol{\lambda}}
\newcommand{\bnu}{\boldsymbol{\nu}}
\newcommand{\bgamma}{\boldsymbol{\gamma}}
\renewcommand{\@makefnmark}{\mbox{\textsuperscript{}}}
\def\adots{\mathinner{\mkern2mu\raise0pt\hbox{.}  
\mkern2mu\raise4pt\hbox{.}\mkern1mu
\raise7pt\vbox{\kern7pt\hbox{.}}\mkern1mu}}
\begin{document}

\bibliographystyle{amsplain}

\title{Values of characters sums for finite unitary groups}
\author{Nathaniel Thiem\footnote{Stanford University: \textsf{thiem@math.stanford.edu}}{ } and C. Ryan Vinroot\footnote{University of Arizona: \textsf{vinroot@math.arizona.edu}}}
\date{}

\maketitle




\begin{abstract}
A known result for the finite general linear group $\GL(n,\FF_q)$ and for the finite unitary group $\U(n,\FF_{q^2})$ posits that the sum of the irreducible character degrees is equal to the number of symmetric matrices in the group.  Fulman and Guralnick extended this result by considering sums of irreducible characters evaluated at an arbitrary conjugacy class of $\GL(n,\FF_q)$.   We develop an explicit formula for the value of the permutation character of $\U(2n,\FF_{q^2})$ over $\Sp(2n,\FF_q)$ evaluated an an arbitrary conjugacy class and use results concerning Gelfand-Graev characters to obtain an analogous formula for $\U(n,\FF_{q^2})$ in the case where $q$ is an odd prime.  These results are also given as probabilistic statements. 
\end{abstract}

\section{Introduction\protect\footnote{MSC 2000: 20C33 (05E05)}\protect\footnote{Keywords: finite unitary group, character sums, conjugacy, Hall-Littlewood functions}}

An important topic of interest in probabilistic group theory is the study of the statistical behavior of random conjugacy classes.   Fulman and Guralnick study this question for the finite general linear group by evaluating several character sums at arbitrary elements \cite{fulgural}. One of the main tools used there is a {\em model} for the group ${\rm GL}(n, \FF_q)$, which is a way of writing the sum of all of the irreducible characters of the group as a sum of characters which are induced from linear characters of subgroups.  The model for ${\rm GL}(n, \FF_q)$ is obtained by a sum of Harish-Chandra induction of Gelfand-Graev characters and permutation characters of the finite symplectic group in various Levi subgroups.  Values of the Gelfand-Graev characters of the finite general linear group are known, and the values of the permutation character of the finite symplectic group are obtained using results of $\GL(2n, \FF_q)$-conjugacy in ${\rm Sp}(2n, \FF_q)$ due to Wall \cite{wall}.  

The purpose of this paper is to make the parallel computations for the finite unitary group $\U(n, \FF_{q^2})$, which involve several structural differences from \cite{fulgural}.  As in \cite{fulgural}, we rely on a model for the finite unitary group, and the values for the permutation character of the finite symplectic group.   The model for the finite unitary group involves replacing Harish-Chandra induction by the more general Deligne-Lusztig induction, and can be found in \cite{TV05}.  To compute the permutation character, we use \cite{wall} and \cite{fulconj} to translate the conjugacy class information from $\Sp(2n, \FF_q)$ to $\U(2n, \FF_{q^2})$ (in the case of $\GL(n,\FF_q)$, the corresponding results are explicitly in \cite{wall}).  The main results obtained are only proven for odd $q$, because the decomposition of the permutation character of $\U(2n, \FF_{q^2})$ on $\Sp(2n, \FF_q)$ is not known for the case that $q$ is even.  An analogous decomposition for the case $q$ even would immediately imply the results here extend to all $q$.

The organization of the paper is as follows.  Section \ref{symmetric} reviews definitions and results on partitions and symmetric functions, with a particular emphasis on the Hall-Littlewood symmetric functions.  Section \ref{conjclasses} describes the conjugacy classes of the finite general linear, unitary, and symplectic groups, and gives the sizes of centralizers in terms of the combinatorial information which parameterize these conjugacy classes.

The main results are in Section \ref{charsums}.  In particular, Theorem \ref{permcharvals} computes the value of the permutation character
$$ {\rm Ind}_{\Sp(2n, \FF_q)}^{\U(2n, \FF_{q^2})} ({\bf 1})$$
at an arbitrary class of ${\rm U}(2n, \FF_{q^2})$, and Theorem \ref{fullsum}  evaluates the sum
$$ \sum_{\chi \in {\rm Irr}(\U(n, \FF_{q^2}))} \chi $$
evaluated at an arbitrary class of ${\rm U}(n, \FF_{q^2})$.  Finally, we translate these main results into probabilistic  statements in Section \ref{probs}.




\section{Partitions and symmetric functions}\label{symmetric}

This section will review some fundamental definitions and results used in this paper, including partitions, Hall polynomials, and symmetric functions. 

\subsection{Partitions}Let 
$$\cP=\bigcup_{n\geq 0} \cP_n,\qquad \text{where} \qquad \cP_n=\{\text{partitions of $n$}\}.$$
For $\nu = (\nu_1, \nu_2, \ldots, \nu_l) \in \cP_n$, the {\em length} $\ell(\nu)$ of $\nu$ is the number of parts $l$ of $\nu$, and the {\em size} $|\nu|$ of $\nu$ is the sum of the parts $n$.  Let $\nu'$ denote the conjugate of the partition $\nu$.    We also write
$$\nu=(1^{m_1(\nu)} 2^{m_2(\nu)} \cdots),\qquad\text{where}\qquad m_i(\nu) = |\{ j\in \ZZ_{\geq 1} \mid \nu_j = i \}|.$$
Let $o(\nu)$ denote the number of odd parts of $\nu$, and define $n(\nu)$ to be
$$n(\nu)=\sum_j (j - 1) \nu_i.$$
The following will be used in calculating a sign in one of the main results.

\begin{lemma} \label{evensign}
Let $\nu \in \cP$ be such that either $m_i(\nu) \in 2\ZZ_{\geq 0}$ whenever $i$ is even, or $m_i(\nu) \in 2\ZZ_{\geq 0}$ whenever $i$ is odd.  Then
$$ (|\nu| - o(\nu))/2 \equiv \lfloor |\nu|/2 \rfloor + n(\nu) \; (\text{mod } 2).$$
\end{lemma}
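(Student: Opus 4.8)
The plan is to first simplify both sides of the claimed congruence so that it reduces to a statement about $o(\nu)$ and $n(\nu)$ alone, and then to compute $n(\nu)$ modulo $2$ from the multiplicities $m_i(\nu)$, at which point the hypothesis will be used. For the reduction, write $|\nu|=\sum_i i\,m_i(\nu)=2\sum_i\lfloor i/2\rfloor m_i(\nu)+o(\nu)$ and set $A=\sum_i\lfloor i/2\rfloor m_i(\nu)$, so that $(|\nu|-o(\nu))/2=A$ and $\lfloor|\nu|/2\rfloor=A+\lfloor o(\nu)/2\rfloor$. Subtracting, the assertion of the lemma is equivalent to
\[ n(\nu)\equiv\lfloor o(\nu)/2\rfloor\pmod 2, \]
and it is this that I would prove; note that the hypothesis on the $m_i(\nu)$ has not yet entered, and it is exactly what is needed here.

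Next I would express $n(\nu)$ through the multiplicities. Using $n(\nu)=\sum_j(j-1)\nu_j=\sum_{i<j}\min(\nu_i,\nu_j)$ and grouping equal parts gives the standard expansion $n(\nu)=\sum_i i\binom{m_i(\nu)}{2}+\sum_{i<j}i\,m_i(\nu)m_j(\nu)$. Modulo $2$ only the summands with $i$ odd survive, so
\[ n(\nu)\equiv\sum_{i\text{ odd}}\binom{m_i(\nu)}{2}+\sum_{\substack{i<j\\ i\text{ odd}}}m_i(\nu)m_j(\nu)\pmod 2. \]

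Now split according to the hypothesis. If $m_i(\nu)$ is even for every even $i$, then each term of the double sum with $j$ even is $\equiv0$, and what remains together with $\sum_{i\text{ odd}}\binom{m_i(\nu)}{2}$ is precisely $\binom{o(\nu)}{2}$ by the identity $\binom{\sum_k a_k}{2}=\sum_k\binom{a_k}{2}+\sum_{k<l}a_ka_l$ applied to the family $\{m_i(\nu):i\text{ odd}\}$; since $\binom{m}{2}\equiv\lfloor m/2\rfloor\pmod 2$ for every nonnegative integer $m$ (check $m$ even and $m$ odd separately), this yields $n(\nu)\equiv\lfloor o(\nu)/2\rfloor$. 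If instead $m_i(\nu)$ is even for every odd $i$, then $o(\nu)$ is even, the double sum is $\equiv0$ (each factor $m_i(\nu)$ with $i$ odd being even), and writing $m_i(\nu)=2n_i$ for odd $i$ gives $\binom{m_i(\nu)}{2}\equiv n_i\pmod 2$, so $n(\nu)\equiv\sum_{i\text{ odd}}n_i=o(\nu)/2=\lfloor o(\nu)/2\rfloor$. Either way the congruence of the first paragraph holds.

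The argument is essentially parity bookkeeping, and the only step that requires a small idea is the first case: recognizing the binomial terms plus the odd--odd cross terms as $\binom{o(\nu)}{2}$ and then reducing $\binom{o(\nu)}{2}$ modulo $2$. A point to keep in mind is that the two branches of the hypothesis are handled by different computations but converge to the same value $\lfloor o(\nu)/2\rfloor$, and that the hypothesis genuinely cannot be dropped --- for example $\nu=(2,1^3)$ satisfies neither condition and violates the conclusion.
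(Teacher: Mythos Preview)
Your proof is correct and takes a genuinely different route from the paper's. Both arguments first reduce the claim to the equivalent assertion $n(\nu)\equiv\lfloor o(\nu)/2\rfloor\pmod 2$, but then diverge. You expand $n(\nu)$ in terms of the multiplicities as $\sum_i i\binom{m_i}{2}+\sum_{i<j} i\,m_i m_j$, reduce modulo $2$, and in each case collapse the surviving terms into $\binom{o(\nu)}{2}\equiv\lfloor o(\nu)/2\rfloor$ or directly into $o(\nu)/2$ via the identities $\binom{\sum a_k}{2}=\sum\binom{a_k}{2}+\sum_{k<l}a_ka_l$ and $\binom{m}{2}\equiv\lfloor m/2\rfloor\pmod 2$. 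The paper instead reasons positionally: it observes that $n(\nu)\equiv\sum_{j\text{ even}}\nu_j\pmod 2$, which counts the odd parts sitting at even positions, and then argues from the evenness hypotheses that exactly half of the odd parts occupy even positions; the case $|\nu|$ odd is handled by appending an extra odd part and invoking the even case. Your approach is more uniform and purely algebraic, dispatching both hypotheses by the same kind of bookkeeping and avoiding the paper's auxiliary construction of $\tilde\nu$; the paper's argument is shorter but leans on a parity-of-positions observation that is perhaps less transparent. Your counterexample $\nu=(2,1^3)$ is also correct and a nice touch.
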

\begin{proof}  An alternate statement of the Lemma is if $\nu$ satisfies one of the conditions above and $|\nu|$ is even, then $n(\nu) + o(\nu)/2$ is even, and if $\nu$ satisfies the first condition above and $|\nu|$ is odd, then $n(\nu) + (o(\nu) - 1)/2$ is even. 

First suppose that $m_i(\nu)$ is even whenever $i$ is odd, so that $|\nu|$ and $o(\nu)$ are both even.  This implies that exactly half of the odd parts of $\nu$ are of the form $\nu_{2j}$.  Consequently,
$$ n(\nu) + o(\nu)/2 = \sum_j (j-1)\nu_j + o(\nu)/2 $$
is even, giving the result.

Now suppose that $m_i(\nu)$ is even whenever $i$ is even, and further suppose that $|\nu|$ is even, so that $o(\nu)$ is even as well.  Then exactly half of the even parts of $\nu$ are of the form $\nu_{2j}$, which implies that exactly half of the odd parts of $\nu$ are of this form as well.  Again we have that $n(\nu) + o(\nu)/2$ is even.

Finally, suppose that $m_i(\nu)$ is even whenever $i$ is even, but that $|\nu|$ is odd, so that $o(\nu)$ is odd.  This implies that $\ell = \ell(\nu)$ is also odd.  Now choose an odd number $k$ such that $k \leq \nu_{\ell}$, and define a new partition $\tilde{\nu}$ by
$$ \tilde{\nu} = (\nu_1, \nu_2, \ldots, \nu_{\ell}, k).$$
Now, $\tilde{\nu}$ satisfies the condition that $m_i(\tilde{\nu})$ is even whenever $i$ is even, except now we have that $|\tilde{\nu}|$ is even and $o(\tilde{\nu}) = o(\nu) + 1$ is even as well.  From the previous case, we have that $n(\tilde{\nu}) + o(\tilde{\nu})/2$ is even.  We also have
$$n(\tilde{\nu}) - n(\nu) = \ell k \;\; \text{ and } \;\; o(\tilde{\nu})/2 - (o(\nu) - 1)/2 = 1$$
are odd.  Therefore $n(\nu) + (o(\nu) - 1)/2$ is even.
\end{proof}

\subsection{Hall polynomials} Let $R$ be a discrete valuation ring with maximal ideal $\wp$, and with finite residue field of size $q$.  Finite $R$-modules are then parameterized by partitions, where the module $M$ of type $\lambda \in \cP$ is isomorphic to
$$ \bigoplus_{i=1}^{\ell(\lambda)} R/ \wp^{\lambda_i}.$$
Given a finite $R$-module $M$ of type $\lambda$, the number of submodules $N$ of type $\mu$ such that $M/N$ is of type $\nu$ is a polynomial in $q$ (see \cite[Chapter II]{Mac}), and is the called the \emph{Hall polynomial}, written $g_{\mu \nu}^{\lambda}(q)$.  We may thus consider the Hall polynomial in some indeterminate $t$, $g_{\mu\nu}^{\lambda}(t)$.  Note that we have
\begin{equation*}
g_{\mu\nu}^{\lambda} (t) = 0 \;\; \text{ unless } \;\; |\mu|+|\nu| = |\lambda| \;\; \text{ and } \;\; \mu, \nu \subset \lambda.
\end{equation*}
In particular, we have
\begin{equation} \label{hallempty}
g_{\emptyset \nu}^{\lambda}(t) = \begin{cases} 1 & \text{if } \nu=\lambda, \\ 0 & \text{otherwise,} \end{cases}
\end{equation}
where $\emptyset$ denotes the empty partition.

\subsection{The ring of symmetric functions}  A \emph{symmetric polynomial} $f \in \ZZ[x_1, x_2, \ldots, x_n]$ is a polynomial which is invariant under the action of the symmetric group $S_n$ permuting the variables.  Let $\Lambda_n$ be the set of symmetric polynomials in $\ZZ[x_1, x_2, \ldots, x_n]$, so that 
$$ \Lambda_n = \ZZ[x_1, x_2, \ldots, x_n]^{S_n}.$$
Following \cite[I.2]{Mac}, let $\Lambda_n^k$ denote the homogeneous elements in $\Lambda_n$ of degree $k$, and for $m>n$, we have the natural projection map 
$$ p_{m,n}^k: \Lambda_m^k \rightarrow \Lambda_n^k,$$
which may be used to form the inverse limit
$$ \Lambda^k = \lim_{\longleftarrow} \Lambda^k_n.$$
We then define $\Lambda$, the \emph{ring of symmetric functions} over $\ZZ$ in the countably infinite list $x = \{x_1, x_2, \ldots \}$ of independent variables, to be
$$ \bigoplus_k \Lambda^k,$$
and for $t \in \CC^{\times}$, let
$$\Lambda[t]=\ZZ[t]\otimes_\ZZ \Lambda.$$  
\subsection{Hall-Littlewood symmetric functions}  A symmetric function over $\ZZ[t]$ of central importance here is the \emph{Hall-Littlewood} symmetric function $P_{\lambda}(x;t)$, where $\lambda \in \cP$.  A thorough discussion of Hall-Littlewood functions may be found in \cite[Chapter III]{Mac}, but we give the definition here for completeness.

Let $\lambda \in \cP$ such that $\ell(\lambda) \leq n$, where we let $\lambda_i = 0$ if $i > \ell(\lambda)$.  For any $\sigma \in S_n$ and polynomial $f$ in $x_1, \ldots, x_n$, let $\sigma(f)$ denote the action of $\sigma$ on $f$ by permuting the variables.  The Hall-Littlewood symmetric polynomial is defined to be
$$ P_{\lambda}(x_1, \ldots, x_n; t) = \prod_{i \geq 0} \prod_{j=1}^{m_i(\lambda)} \frac{1-t}{1-t^j} \sum_{\sigma \in S_n} \sigma \Big(x_1^{\lambda_1} \cdots x_n^{\lambda_n} \prod_{i <j} \frac{x_i - tx_j}{x_i - x_j} \Big).$$
Then the Hall-Littlewood symmetric function $P_{\lambda}(x ; t)$ is obtained by finding the image of the Hall-Littlewood symmetric polynomials through the limiting process described above.  The set of functions $P_{\lambda}(x ; t)$, as $\lambda$ ranges over all partitions, forms a $\ZZ[t]$-basis of $\Lambda[t]$ (\cite[III.2.7]{Mac}).  

Hall polynomials show up as coefficients when products of Hall-Littlewood functions are expressed as a sum of Hall-Littlewood functions, as given in the following result, found in \cite[III.3]{Mac}.

\begin{lemma} \label{HLprod}
$$t^{n(\mu)} P_{\mu}(x ; t) t^{n(\nu)} P_{\nu}(x;t) = \sum_{\lambda \in \cP} g_{\mu \nu}^{\lambda} (1/t) t^{n(\lambda)} P_{\lambda}(x ; t).$$
\end{lemma}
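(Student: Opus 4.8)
The plan is to recognize the identity as the image, under a suitable algebra homomorphism, of the defining multiplication in the \emph{Hall algebra}. For a prime power $q$, fix a discrete valuation ring $R$ with residue field of size $q$ as in Section~\ref{symmetric}, and let $H_q$ be the free $\ZZ$-module on symbols $\{u_\lambda : \lambda\in\cP\}$ with the product $u_\mu u_\nu = \sum_{\lambda} g_{\mu\nu}^{\lambda}(q)\, u_\lambda$. First I would check that this makes $H_q$ a commutative associative ring: associativity holds because $\sum_\alpha g_{\mu\nu}^{\alpha}(q)\, g_{\alpha\rho}^{\lambda}(q)$ and $\sum_\beta g_{\nu\rho}^{\beta}(q)\, g_{\mu\beta}^{\lambda}(q)$ both count chains of submodules $0\subseteq N_1\subseteq N_2\subseteq M$ of a fixed $R$-module $M$ of type $\lambda$ whose successive subquotients (read from the bottom) have types $\mu,\nu,\rho$; commutativity, $g_{\mu\nu}^{\lambda}(q) = g_{\nu\mu}^{\lambda}(q)$, follows from Pontryagin-type duality for finite $R$-modules (see \cite[II.4]{Mac}). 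It is also classical that $H_q$ is generated over $\ZZ$ by the elements $u_{(1^r)}$, $r\geq 1$.

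Next I would build the comparison map. Since $\Lambda\otimes_\ZZ\QQ = \QQ[e_1,e_2,\ldots]$ is a polynomial algebra on the elementary symmetric functions, there is a well-defined $\QQ$-algebra homomorphism $\Psi_q : \Lambda\otimes_\ZZ\QQ \to H_q\otimes_\ZZ\QQ$ with $\Psi_q(e_r) = q^{\,n((1^r))}\, u_{(1^r)}$ for every $r$; note $e_r = P_{(1^r)}(x;q^{-1})$ and $\Psi_q(e_1) = u_{(1)}$ since $n((1))=0$. The heart of the argument is to show that $\Psi_q\bigl(P_\lambda(x;q^{-1})\bigr) = q^{\,n(\lambda)}\, u_\lambda$ for \emph{all} $\lambda\in\cP$. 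I would prove this by induction on $|\lambda|$: applying $\Psi_q$ to the Hall--Littlewood Pieri rule for multiplication by $P_{(1)}=e_1$ gives, via multiplicativity and the inductive hypothesis, the relation $u_{(1)}\cdot q^{\,n(\lambda)}u_\lambda = q^{\,n(\lambda)}\sum_\mu g_{(1)\lambda}^{\mu}(q)\,u_\mu$, where $g_{(1)\lambda}^{\mu}(q)$ has a known explicit evaluation counting simple submodules; matching this, term by term and with the requisite triangularity in the dominance order, against the explicit Hall--Littlewood Pieri coefficients pins down $\Psi_q(P_\mu(x;q^{-1}))$ for each $\mu$ with $|\mu|=|\lambda|+1$, completing the induction.

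Granting this, I would compute $\Psi_q\bigl(P_\mu(x;q^{-1})\,P_\nu(x;q^{-1})\bigr)$ in two ways — directly by multiplicativity, giving $q^{\,n(\mu)+n(\nu)}\sum_\lambda g_{\mu\nu}^{\lambda}(q)\,u_\lambda$, and by first expanding $P_\mu P_\nu$ in the $P_\lambda$-basis and then applying $\Psi_q$ — and, using that the $u_\lambda$ are $\QQ$-linearly independent, conclude
$$P_\mu(x;q^{-1})\,P_\nu(x;q^{-1}) = \sum_{\lambda} q^{\,n(\mu)+n(\nu)-n(\lambda)}\, g_{\mu\nu}^{\lambda}(q)\, P_\lambda(x;q^{-1}).$$
Multiplying through by $q^{-n(\mu)-n(\nu)}$ and setting $t = q^{-1}$ yields exactly the asserted identity, specialized at $t = 1/q$. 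To finish, I would observe that after multiplying by a suitable power of $t$ both sides lie in $\Lambda[t]$ — here one uses that $t^{\,n(\lambda)-n(\mu)-n(\nu)}\,g_{\mu\nu}^{\lambda}(1/t)$ is a polynomial in $t$, again visible from the submodule-counting description — and that $\{P_\lambda(x;t)\}$ is a $\ZZ[t]$-basis of $\Lambda[t]$; comparing coefficients reduces the claim to finitely many one-variable polynomial identities, each valid at the infinitely many points $t = 1/q$ and hence identically. The main obstacle is the inductive step of the second paragraph, namely matching the Hall-polynomial Pieri coefficients $g_{(1)\lambda}^{\mu}(q)$ with the Hall--Littlewood Pieri coefficients; everything else is formal, and a reader willing to quote the literature may simply invoke \cite[III.3]{Mac}, where precisely this identification is carried out.
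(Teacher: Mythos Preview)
Your proposal is correct and in fact goes well beyond what the paper does: the paper gives no proof of this lemma at all, merely citing \cite[III.3]{Mac}. What you have sketched is essentially the argument carried out in that reference --- build the Hall algebra, identify it with the ring of symmetric functions via $u_\lambda \leftrightarrow q^{-n(\lambda)}P_\lambda(x;q^{-1})$, read off the structure constants, and then pass from infinitely many specializations $t=1/q$ to the polynomial identity. One caution: your inductive step using only the $e_1$-Pieri rule is thinner than what Macdonald actually does (a single Pieri relation per $\lambda$ of size $n$ does not obviously suffice to pin down all $\Psi_q(P_\mu)$ with $|\mu|=n+1$, since there are more of the latter than the former); Macdonald instead characterizes the $P_\lambda$ by triangularity against the monomial basis together with an orthogonality condition, and verifies that the images $q^{-n(\lambda)}\psi(u_\lambda)$ satisfy the same characterization. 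You correctly flag this as the main obstacle and note that one may simply invoke \cite[III.3]{Mac}, which is exactly the paper's stance.
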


We will use the following identity, which is obtained in \cite[III.3, Example 1]{Mac}.

\begin{lemma} \label{HLsum1}
$$ \sum_{\lambda \in \cP} t^{n(\lambda)} \prod_{j=1}^{\ell(\lambda)} (1 + t^{1-j} y) P_{\lambda}(x ; t) = \prod_{j \geq 1} \frac{1 + x_j y}{1 - x_j}.$$
\end{lemma}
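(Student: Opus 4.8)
The plan is to prove the $y=0$ case separately — where the identity reads $\sum_{\lambda\in\cP}t^{n(\lambda)}P_\lambda(x;t)=\prod_{j\ge1}(1-x_j)^{-1}$ — and then bootstrap to the general case using Lemma~\ref{HLprod} and the classical $q$-binomial theorem. For the $y=0$ statement I would either cite \cite[III.2]{Mac} or derive it from the Cauchy identity $\sum_{\lambda}P_\lambda(x;t)Q_\lambda(z;t)=\prod_{i,j}\frac{1-tx_iz_j}{1-x_iz_j}$ by specialising $z=(1,t,t^2,\dots)$, using the evaluation $Q_\lambda(1,t,t^2,\dots;t)=t^{n(\lambda)}$ and the telescoping product $\prod_{j\ge1}\frac{1-x_it^j}{1-x_it^{j-1}}=\frac{1}{1-x_i}$.

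Next I would establish the Hall-polynomial identity
$$\sum_{\lambda\in\cP}g^{\mu}_{\lambda,(1^k)}(q)=\binom{\ell(\mu)}{k}_{q}$$
for all $\mu\in\cP$ and all $k\ge0$, where $\binom{\ell(\mu)}{k}_{q}$ denotes the Gaussian $q$-binomial coefficient. This is immediate from the module-theoretic meaning of Hall polynomials: if $M$ is an $R$-module of type $\mu$, then the left-hand side counts the submodules $N\le M$ with $M/N$ of type $(1^k)$, i.e.\ with $M/N\cong(R/\wp)^{\oplus k}$; such $N$ are exactly the submodules containing $\wp M$ with $\dim_{R/\wp}(M/N)=k$, and these correspond bijectively to the codimension-$k$ subspaces of $M/\wp M\cong(R/\wp)^{\ell(\mu)}$.

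Then I would assemble the two ingredients. By the $q$-binomial theorem applied with parameter $1/t$,
$$\prod_{j=1}^{\ell(\mu)}\big(1+t^{1-j}y\big)=\prod_{i=0}^{\ell(\mu)-1}\big(1+t^{-i}y\big)=\sum_{k\ge0}t^{-\binom{k}{2}}\binom{\ell(\mu)}{k}_{1/t}\,y^k.$$
Substituting this into the left-hand side of the Lemma, interchanging the sums over $\mu$ and $k$ (legitimate because in each fixed degree in $x$ only finitely many $\mu$, and then finitely many $k$, contribute), and writing $\binom{\ell(\mu)}{k}_{1/t}=\sum_{\lambda\in\cP}g^{\mu}_{\lambda,(1^k)}(1/t)$ by the previous step, the left-hand side becomes
$$\sum_{k\ge0}y^k\,t^{-\binom{k}{2}}\sum_{\lambda\in\cP}\;\sum_{\mu\in\cP}g^{\mu}_{\lambda,(1^k)}(1/t)\,t^{n(\mu)}P_\mu(x;t).$$
For fixed $\lambda$ and $k$, Lemma~\ref{HLprod} applied with $(\mu,\nu)=(\lambda,(1^k))$ (and its summation index relabelled $\mu$), together with $P_{(1^k)}(x;t)=e_k(x)$ and $n((1^k))=\binom{k}{2}$, identifies the inner sum over $\mu$ with $t^{n(\lambda)}P_\lambda(x;t)\cdot t^{\binom{k}{2}}e_k(x)$. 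Thus the whole expression collapses to
$$\Big(\sum_{k\ge0}e_k(x)\,y^k\Big)\Big(\sum_{\lambda\in\cP}t^{n(\lambda)}P_\lambda(x;t)\Big)=\prod_{j\ge1}(1+x_jy)\cdot\prod_{j\ge1}(1-x_j)^{-1},$$
using the generating function $\sum_{k\ge0}e_k(x)y^k=\prod_{j\ge1}(1+x_jy)$ and the $y=0$ identity from the first step; this is the right-hand side of the Lemma.

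The only genuinely delicate point is the bookkeeping of $\ell(\mu)$: the factor multiplying $P_\mu(x;t)$ on the left-hand side depends on the number of parts of $\mu$, and what makes this compatible with a clean Hall-polynomial expansion is exactly that $\sum_{\lambda}g^{\mu}_{\lambda,(1^k)}$ depends on $\mu$ only through $\ell(\mu)$ and equals the Gaussian binomial coefficient that the $q$-binomial theorem produces from $\prod_{j=1}^{\ell(\mu)}(1+t^{1-j}y)$. Beyond that, the argument is formal manipulation of series, valid term by term in each degree in $x$.
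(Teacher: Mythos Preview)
Your argument is correct. The paper itself does not prove this lemma at all: it simply cites \cite[III.3, Example 1]{Mac} and moves on. So there is no ``paper's proof'' to compare against beyond that reference.

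Your route is a clean, self-contained derivation that avoids invoking Macdonald as a black box. The two key ingredients---the principal specialization $Q_\lambda(1,t,t^2,\dots;t)=t^{n(\lambda)}$ giving the $y=0$ case, and the module-theoretic count $\sum_{\lambda}g^{\mu}_{\lambda,(1^k)}(q)=\binom{\ell(\mu)}{k}_q$---are both standard and correctly justified. The assembly via Lemma~\ref{HLprod}, $P_{(1^k)}=e_k$, $n((1^k))=\binom{k}{2}$, and the $q$-binomial theorem is routine once those pieces are in hand, and your bookkeeping (in particular the observation that the $\ell(\mu)$-dependence of the prefactor is exactly what the Gaussian binomial absorbs) is accurate. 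One small presentational remark: when you invoke the Cauchy specialization you are implicitly working formally (or with $|t|<1$) so that the telescoping product converges; this is harmless here since the identity is between formal power series graded by degree in $x$, but it is worth a word.
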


We may also view a partition $\lambda \in \cP$ as a set of ordered pairs $(i,j)$ of positive integers, where $1 \leq j \leq \lambda_i$ and $1 \leq i \leq \lambda'_j$.  For a parameter $t$, we define the function $c_{\lambda}(t)$ as 
\begin{equation} \label{cdefn} c_{\lambda}(t) = \prod_{ (i,j) \in \lambda, \lambda_i = j \atop{ \lambda_j' - i \text{ even }}} ( 1 - t^{\lambda_j' - i + 1}).
\end{equation}
For the pair $(i,j) \in \lambda$, the quantity $\lambda_i - j$ is called the {\em arm} of $(i,j)$, and $\lambda_j' - i$ is called the {\em leg} of $(i,j)$.  Note that we also have
\begin{equation} \label{cprod}
c_{\lambda}(t) = \prod_{i} \prod_{j \text{ odd} \atop{j \leq m_i(\lambda)}} ( 1 - t^j).
\end{equation}
  The following identity involving Hall-Littlewood functions and the function $c_{\lambda}(t)$ was proven by Kawanaka in \cite{kawanaka}, and a purely combinatorial proof was later given by Fulman and Guralnick in \cite{fulgural}.

\begin{theorem} [Kawanaka] \label{HLKa}
$$ \sum_{\lambda \in \cP \atop{ m_k(\lambda) \text{ even } \atop{ \text{for $k$ odd}} }} t^{(o(\lambda) - |\lambda|)/2}c_{\lambda}(t) P_{\lambda}(x;t) = \prod_{i \leq j} \frac{1-x_ix_j}{1 - x_i x_j/t}. $$
\end{theorem}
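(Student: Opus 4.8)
I would prove the finite-variable truncation by induction on the number of variables $n$, with both sides read as symmetric polynomials in $x_1,\dots,x_n$ over $\ZZ[t]$ and $P_\lambda$ set to $0$ when $\ell(\lambda)>n$; passing to the inverse limit then gives the stated identity. Write $L_n$ and $R_n$ for the two sides in the variables $x_1,\dots,x_n$. For $n=1$ the parity constraint forces $\lambda=(2r)$, for which \eqref{cprod} gives $c_{(2r)}(t)=1-t$ (and $c_\emptyset(t)=1$), so $L_1=1+(1-t)\sum_{r\ge 1}t^{-r}x_1^{2r}$; expanding the geometric series shows this equals $R_1=\frac{1-x_1^2}{1-x_1^2/t}$, which is the base case.

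For the inductive step, split off from $R_n$ the factors that involve the index $n$:
$$R_n=R_{n-1}\cdot\frac{1-x_n^2}{1-x_n^2/t}\cdot\prod_{i=1}^{n-1}\frac{1-x_ix_n}{1-x_ix_n/t}.$$
Here $\frac{1-x_n^2}{1-x_n^2/t}=1+(1-t)\sum_{s\ge 1}t^{-s}x_n^{2s}$, while $\prod_{i=1}^{n-1}\frac{1-x_ix_n}{1-x_ix_n/t}=\sum_{r\ge 0}t^{-r}Q_{(r)}(x_1,\dots,x_{n-1};t)\,x_n^{r}$ is the generating function for the one-row Hall--Littlewood functions $Q_{(r)}(x;t)$, obtained from $\sum_{r}Q_{(r)}(x;t)z^{r}=\prod_i\frac{1-tx_iz}{1-x_iz}$ by setting $z=x_n/t$. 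By the inductive hypothesis $R_{n-1}=L_{n-1}$, whose $P_\mu$-expansion is the explicit one on the left side of the theorem; multiplying it by $\sum_r t^{-r}Q_{(r)}x_n^{r}$ using the Pieri rule $Q_{(r)}(x;t)P_\mu(x;t)=\sum_{\lambda}\varphi_{\lambda/\mu}(t)P_\lambda(x;t)$ (sum over $\lambda\supseteq\mu$ with $\lambda/\mu$ a horizontal $r$-strip), and then by $1+(1-t)\sum_s t^{-s}x_n^{2s}$, expresses $R_n$ as an explicit $\ZZ[t]$-combination of the products $x_n^{m}P_\nu(x_1,\dots,x_{n-1};t)$.

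On the other side, the branching rule $P_\lambda(x_1,\dots,x_n;t)=\sum_{\nu}\psi_{\lambda/\nu}(t)\,x_n^{|\lambda|-|\nu|}\,P_\nu(x_1,\dots,x_{n-1};t)$ (sum over $\nu$ with $\lambda/\nu$ a horizontal strip) expands $L_n$ in the same form. Equating the coefficient of $x_n^{m}P_\nu(x_1,\dots,x_{n-1};t)$ on both sides reduces the theorem to the following identity, to hold for every partition $\nu$ and every $m\ge 0$:
$$\sum_{\lambda}c_\lambda(t)\,t^{(o(\lambda)-|\lambda|)/2}\,\psi_{\lambda/\nu}(t)=\sum_{\substack{r,s\ge 0\\ r+2s=m}}\varepsilon_s\,t^{-(r+s)}\sum_{\mu}c_\mu(t)\,t^{(o(\mu)-|\mu|)/2}\,\varphi_{\nu/\mu}(t),$$
where $\varepsilon_0=1$ and $\varepsilon_s=1-t$ for $s\ge 1$, the first sum runs over $\lambda\supseteq\nu$ with $\lambda/\nu$ a horizontal $m$-strip and $m_k(\lambda)$ even for $k$ odd, and the inner sum on the right runs over $\mu\subseteq\nu$ with $\nu/\mu$ a horizontal $r$-strip and $m_k(\mu)$ even for $k$ odd. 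To verify this I would insert the product formula \eqref{cprod} for $c_\lambda(t)$ and $c_\mu(t)$ together with the standard product formulas for $\psi_{\lambda/\nu}(t)$ and $\varphi_{\nu/\mu}(t)$, so that everything becomes explicit in the part-multiplicities, and then try to argue part-size by part-size.

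The step I expect to be the main obstacle is precisely this last combinatorial identity. Although each symmetric-function manipulation above is routine, the horizontal-strip conditions couple the multiplicities of consecutive part-sizes of $\nu$, so the sums do not factor cleanly over part-sizes, and one must argue carefully that the constraint ``$m_k$ even for $k$ odd'' is compatible with the particular ways of adding and removing a horizontal strip that occur on the two sides; this is where the parity hypotheses really get used, in a spirit reminiscent of the parity bookkeeping in Lemma \ref{evensign}. An alternative route is to expand $R_n$ directly in the $P_\lambda$-basis: write $\prod_{i\le j}\frac{1}{1-x_ix_j/t}$ in that basis using a Littlewood-type identity of the kind collected in \cite[III.5]{Mac}, expand the polynomial factor $\prod_{i\le j}(1-x_ix_j)$ and re-collect using the Hall-polynomial multiplication rule of Lemma \ref{HLprod}, and cancel the unwanted terms by a sign-reversing involution on partitions; with this route the obstacle migrates to the construction of that involution, which is essentially the content of the combinatorial proof of Fulman and Guralnick in \cite{fulgural}.
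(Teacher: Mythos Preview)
The paper does not actually prove this theorem: it is quoted from Kawanaka \cite{kawanaka}, with a pointer to the combinatorial proof of Fulman and Guralnick \cite{fulgural}, so there is no in-paper argument to compare against.

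Your inductive framework is set up correctly. The base case $n=1$ is right, the separation of the $x_n$-factors from $R_n$ and the identification of $\prod_{i<n}\frac{1-x_ix_n}{1-x_ix_n/t}$ with the generating series $\sum_r t^{-r}Q_{(r)}(x_1,\dots,x_{n-1};t)x_n^r$ are standard, and the reduction via the Pieri and branching rules to your displayed multiplicities identity is carried out accurately.

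That said, the proposal is not yet a proof: all the substance of the theorem has been pushed into that final identity, which you leave unverified. You diagnose the difficulty correctly --- horizontal strips couple the multiplicities of consecutive part-sizes, so neither side factors over part-sizes --- but you do not resolve it; ``argue part-size by part-size'' cannot work as stated, and no alternative mechanism is offered. The second route you mention (expand $R_n$ directly via a Littlewood-type identity and cancel by a sign-reversing involution) is closer in spirit to what Fulman and Guralnick actually do, but again you only name the involution rather than construct it. As a reduction your write-up is fine; as a proof there is a genuine gap at the one nontrivial step.
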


The next identity, also involving Hall-Littlewood functions and $c_{\lambda}(t)$, is due to Fulman and Guralnick \cite[Theorem 2.8]{fulgural}, and is crucial to obtaining our main result.

\begin{theorem} [Fulman, Guralnick] \label{HLFG}
$$ \sum_{\lambda \in \cP \atop{ m_k(\lambda) \text{ even} \atop{ \text{for $k$ even}} }} \frac{c_{\lambda}(t) P_{\lambda}(x;t)}{t^{(o(\lambda) + |\lambda|)/2}} = \prod_{i \geq 1} \frac{1 + x_i/t}{1+x_i} \prod_{i \leq j} \frac{1- x_i x_j}{1 - x_i x_j/t}. $$
\end{theorem}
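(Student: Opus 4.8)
The plan is to derive Theorem~\ref{HLFG} from Kawanaka's identity (Theorem~\ref{HLKa}). The key observation is that the right-hand side of Theorem~\ref{HLFG} is obtained from the right-hand side of Theorem~\ref{HLKa} simply by multiplying through by the single factor $\prod_{i\geq 1}(1+x_i/t)/(1+x_i)$. Hence, working in the completion of $\Lambda[t,t^{-1}]$ in which these infinite products make sense, it suffices to establish the symmetric-function identity
\[
\sum_{\lambda}\frac{c_{\lambda}(t)P_{\lambda}(x;t)}{t^{(o(\lambda)+|\lambda|)/2}}=\Bigl(\prod_{i\geq 1}\frac{1+x_i/t}{1+x_i}\Bigr)\sum_{\mu}t^{(o(\mu)-|\mu|)/2}c_{\mu}(t)P_{\mu}(x;t),
\]
where $\lambda$ runs over partitions with $m_k(\lambda)$ even for every even $k$ and $\mu$ runs over partitions with $m_k(\mu)$ even for every odd $k$.

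To make this tractable I would clear the denominator, multiplying both sides by $\prod_{i\geq 1}(1+x_i)$. Using the classical fact that $P_{(1^r)}(x;t)=e_r(x)$ for all $r$ and $t$, so that $\prod_{i}(1+x_iz)=\sum_{r\geq 0}z^{r}P_{(1^r)}(x;t)$, the identity becomes
\[
\sum_{s\geq 0}\sum_{\lambda}\frac{c_{\lambda}(t)}{t^{(o(\lambda)+|\lambda|)/2}}\,P_{(1^{s})}(x;t)P_{\lambda}(x;t)=\sum_{r\geq 0}\sum_{\mu}\frac{t^{(o(\mu)-|\mu|)/2}}{t^{r}}\,c_{\mu}(t)\,P_{(1^{r})}(x;t)P_{\mu}(x;t).
\]
Now Lemma~\ref{HLprod}, specialized so that the first partition is the single column $(1^{r})$, is exactly the dual Pieri rule: it expands $P_{(1^r)}P_{\mu}$ as a sum over partitions $\kappa$ with $\kappa/\mu$ a vertical $r$-strip, with coefficient $t^{n(\kappa)-\binom{r}{2}-n(\mu)}g_{(1^{r})\mu}^{\kappa}(1/t)$. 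Equating coefficients of each $P_{\kappa}$ on the two sides reduces Theorem~\ref{HLFG} to the following finite identity, one for each partition $\kappa$:
\[
\sum_{s,\,\lambda}t^{-\binom{s}{2}-n(\lambda)-(o(\lambda)+|\lambda|)/2}\,c_{\lambda}(t)\,g_{(1^{s})\lambda}^{\kappa}(1/t)=\sum_{r,\,\mu}t^{-\binom{r}{2}-n(\mu)-r+(o(\mu)-|\mu|)/2}\,c_{\mu}(t)\,g_{(1^{r})\mu}^{\kappa}(1/t),
\]
where on the left $\kappa/\lambda$ is a vertical strip, $s=|\kappa|-|\lambda|$, and $m_k(\lambda)$ is even for even $k$, and symmetrically on the right with $r=|\kappa|-|\mu|$ and $m_k(\mu)$ even for odd $k$.

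The real work, and the step I expect to be the main obstacle, is proving this combinatorial identity for fixed $\kappa$. Here one has the two ingredients that make the bookkeeping manageable: the product formula \eqref{cprod} for $c_{\lambda}(t)$, which behaves well under removing a vertical strip, and the known explicit formulas (from \cite[Chapter~II]{Mac}) for the Hall polynomials $g_{(1^{r})\mu}^{\kappa}$ associated to vertical strips. I would run an induction --- say on $\ell(\kappa)$, or on the largest part of $\kappa$, removing all parts of $\kappa$ of a fixed size at once --- and track how the ``even multiplicity'' conditions on $\lambda$ and $\mu$ interact with the columns in which the vertical strips sit. The genuinely delicate point is the parity reconciliation: the two sides range over complementary families of partitions (even multiplicities at even parts versus at odd parts), so one must show that the various ``forbidden'' contributions cancel once the powers of $t$ and the Hall polynomials are accounted for. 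A workable alternative to inducting on partitions is to induct on the number of variables $x_1,\dots,x_n$ using the Hall--Littlewood branching rule; this replaces vertical-strip combinatorics by horizontal-strip combinatorics but leaves the same parity reconciliation as the essential difficulty.
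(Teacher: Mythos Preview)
The paper does not actually prove Theorem~\ref{HLFG}: it is quoted from \cite[Theorem~2.8]{fulgural} and used as a black box in the proof of Theorem~\ref{charvalues}. So there is no ``paper's own proof'' to compare against here.

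That said, your proposal is not a proof but a reduction followed by a plan. The reduction step is sound: the right-hand side of Theorem~\ref{HLFG} really is the right-hand side of Theorem~\ref{HLKa} times $\prod_i (1+x_i/t)/(1+x_i)$, and clearing $\prod_i(1+x_i)$ and invoking the dual Pieri rule does lead to a family of finite identities, one per $\kappa$, of the shape you wrote down. But you explicitly stop there. You say the resulting identity is ``the real work'' and ``the main obstacle,'' and offer only that you ``would run an induction'' on $\ell(\kappa)$ or on the number of variables, with the parity reconciliation flagged as ``genuinely delicate.'' That is exactly the substance of the theorem: the interplay between the vertical-strip Hall polynomials, the factorization \eqref{cprod} of $c_\lambda(t)$, and the two complementary parity constraints is where all the content lives, and you have not carried any of it out. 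Absent that, the proposal is an outline, not a proof.

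If you want to complete it along these lines, you should look at how Fulman and Guralnick actually argue in \cite{fulgural}: their proof of Theorem~\ref{HLFG} (and their combinatorial reproof of Theorem~\ref{HLKa}) proceeds via an induction on the number of variables using the Hall--Littlewood branching rule, which is one of the two routes you mention. The induction step there is a nontrivial but finite $q$-series identity, and writing it down and verifying it is the whole proof; that is precisely the piece your proposal is missing.
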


\section{The groups and their conjugacy classes} \label{conjclasses}

After reviewing the definitions of the finite classical groups and their orders, this section analyzes how their conjugacy classes interact and computes the corresponding centralizer subgroup sizes.  

\subsection{The finite classical groups}

Let $\bar{G}_n=\GL(n,\bar\FF_q)$ be the general linear group with entries in the algebraic closure of the finite field $\FF_q$ with $q$ elements.    Let 
$$\begin{array}{rccc}F:& \bar{G}_n & \longrightarrow & \bar{G}_n\\ & (a_{ij}) & \mapsto & (a_{ij}^q)\end{array}$$
be the usual Frobenius homomorphism.  Then the finite general linear groups are given by 
$$\GL(n,\FF_{q^k})=\mathrm{Stab}_{\bar{G}_n}(F^k).$$
The finite unitary groups are given by
\begin{equation*}
\mathrm{U}(n,\FF_{q^{2}})=\{(a_{ij})\in \bar{G}_{n}\ \mid\ (a_{ji}^{q})^{-1}=(a_{ij})\}.
\end{equation*}
Let 
$$J=\left(\begin{array}{ccc|ccc} 
& & & 0 & & -1\\ 
& 0 & & & \adots & \\ 
& & & -1 & & 0 \\ \hline
0 & & 1 & & & \\
& \adots & & & 0 & \\
1 & & 0 & & & \end{array}\right),$$
and let $w=(w_{ij})\in \bar{G}_n$ be an element such that $(w_{ji}^q)^{-1}(w_{ij})^{-1}=J$, whose existence is guaranteed by the Lang-Steinberg Theorem. Then
the finite symplectic groups are given by
\begin{align*}
\mathrm{Sp}(2n,\FF_{q}) & =\{(a_{ij})\in \GL(2n,\FF_{q})\ \mid\ (a_{ij})J(a_{ji})=J\}\\
&\cong\{(a_{ij})\in \mathrm{U}(2n,\FF_{q^{2}})\ \mid\  (a_{ij})w^{-1}Jw (a_{ji})=w^{-1}Jw, wgw^{-1}\in \GL(2n,\FF_{q})\}.
\end{align*}
Using the second definition above, we see that ${\rm Sp}(2n, \FF_q)$ may be viewed as a subgroup of ${\rm U}(2n, \FF_{q^2})$.

We will only deal with finite orthogonal groups (see, for example, \cite{grove} for a definition) in the case that $q$ is odd.  When $m$ is even, ${\rm O}^+(m, \FF_q)$ and ${\rm O}^-(m, \FF_q)$ are the split and non-split finite orthogonal groups, respectively, and when $m$ is odd, ${\rm O}(m, \FF_q)$ is the unique finite orthogonal group.

The orders of the groups of interest in this paper are
\begin{align*}
|\GL(n,\FF_{q})| & = q^{n(n-1)/2}(q^{n}-1)(q^{(n-1)}-1)\cdots (q-1)\\
|\mathrm{U}(n,\FF_{q^{2}})| &= q^{n(n-1)/2}(q^{n}-(-1)^{n})(q^{(n-1)}-(-1)^{(n-1)})\cdots (q+1))\\
|\mathrm{Sp}(2n,\FF_{q})| &=q^{n^2}(q^{2n}-1)(q^{2(n-1)}-1)\cdots (q^2-1)\\
|\mathrm{O}^{\pm}(2n,\FF_{q})| &= 2q^{n^2}(1\pm q^{-n})(q^{2(n-1)}-1)(q^{2(n-2)}-1)\cdots (q^2-1)\\
|\mathrm{O}(2n+1,\FF_{q})|&=2q^{n^2}(q^{2n}-1)(q^{2(n-1)}-1)\cdots (q^2-1).
\end{align*}  

\subsection{Conjugacy classes}

This section examines the conjugacy classes of the following groups, and the behavior of conjugacy classes when considering one group as it is contained in another:
\begin{equation*}\xy<0cm,1.5cm>\xymatrix@R=.5cm@C=.5cm{ & {\GL(2n,\FF_{q^2})} \\ {\GL(2n,\FF_q)} \ar @{-} [ur] & & {\U(2n,\FF_{q^2})}\ar @{-} [ul]  \\ & {\Sp(2n,\FF_q)}\ar @{-} [ul] \ar @{-} [ur] }\endxy\end{equation*}
Define sets,
\begin{equation*}
\Phi =\Phi_1 \qquad\text{where for $k\in\ZZ_{\geq 1}$,}\qquad \Phi_k  =\{F^k\text{-orbits in } \bar{G}_1\}, \end{equation*}
and
\begin{align*}
\tilde{\Phi}_2 &=  \{\sim\text{-orbits in } \Phi_2\}, & \text{where}\quad & \begin{array}{rccc} \sim: & \Phi_2 & \longrightarrow & \Phi_2\\ 
 & s=\{s_1,\ldots, s_k\} &\mapsto & \tilde{s}=\{s_1^{-q},\ldots, s_k^{-q}\}\end{array}\\ 
\Phi^* &= \{\ast\text{-orbits in } \Phi\}, &  \text{where}\quad & \begin{array}{rccc} \ast: & \Phi & \longrightarrow & \Phi\\ 
 & s=\{s_1,\ldots, s_k\} &\mapsto & s^\ast=\{s_1^{-1},\ldots, s_k^{-1}\}.\end{array}
\end{align*}
Note that there is an injective map
$$\begin{array}{ccc}\left\{\text{Subsets of } \bar{G}_1 \right\} & \longrightarrow & \bar{\FF}_q[X]\\ s=\{s_1,s_2,\ldots, s_k\} & \mapsto & f_s=(X-s_1)(X-s_2)\cdots(X-s_k),\end{array}$$ 
that sends $\Phi_k$ to the $\FF_{q^k}$-irreducible polynomials in $\FF_{q^k}[X]$.  Thus, we can identify the sets of orbits $\Phi_k$, $\tilde\Phi_2$ and $\Phi^\ast$ with sets of polynomials in the variable $X$.  Also, for $s = \{s_1, \ldots, s_k \}$, we write $s^q = \{s_1^q, \ldots, s_k^q \}$.

A \emph{$\Phi$-partition} $\blam: \Phi \rightarrow \cP$ is a function which assigns a partition $\blam^{(s)}$ to each orbit $s \in \Phi$.  We may also think of a $\Phi$-partition as a sequence of partitions indexed by $\Phi$.  The \emph{size} of $\blam$ is
$$|\blam|=\sum_{s\in \Phi} |s||\blam^{(s)}|,$$
where $|s|$ is the size of the orbit $s$.  Let
$$\cP^\Phi=\bigcup_{n\geq 0} \cP_n^\Phi,\qquad\text{where}\qquad \cP_n^\Phi=\{\text{$\Phi$-partitions of size $n$}\}.$$
We can define $\Phi_k$-partitions, $\tilde\Phi_2$-partitions, and $\Phi^*$-partitions similarly.

By Jordan rational form (see \cite[IV.2]{Mac}),
$$\left\{\begin{array}{c} \text{Conjugacy classes}\\ \text{of $\GL(n,\FF_{q^k})$}\end{array}\right\} \longleftrightarrow \cP^{\Phi_k}_n$$
such that the conjugacy class corresponding to $\blam\in \cP^{\Phi_k}$ has characteristic polynomial 
$$\prod_{s\in \Phi_k} f_s^{|\blam^{(s)}|}.$$
Furthermore, Wall \cite{wall} and Ennola \cite{ennolaconj} established
$$\left\{\begin{array}{c} \text{Conjugacy classes}\\ \text{of $\U(n,\FF_{q^2})$}\end{array}\right\} \longleftrightarrow \cP^{\tilde\Phi_2}_{n}$$
such that the conjugacy class corresponding to $\blam\in  \cP^{\tilde\Phi_2}$ has characteristic polynomial 
$$\prod_{s\in  \tilde\Phi_2} f_s^{|\blam^{(s)}|}.$$

The following Lemma addresses the relationship between the conjugacy classes of these groups.  By a slight abuse of notation, we will write
$$1=\{1\}\in \Phi\qquad \text{and}\qquad -1=\{-1\}\in \Phi.$$
So, $s = 1$ corresponds to the polynomial $X-1$, and $s = -1$ to $X+1$.

\begin{lemma}\label{UpDownConjugacy} \hfill

\begin{enumerate}
\item[(a)] The element $g_{\blam}\in \GL(n,\FF_{q^2})$, in the conjugacy class given by $\blam \in \cP^{\Phi_2}_n$, is conjugate to an element of $\GL(n,\FF_q)$ if and only if the $\Phi$-partition $\bmu$ given by
$$\bmu^{(s\cup s^q)}=\blam^{(s)}, \qquad\text{for $(s\cup s^q)\in \Phi$,}$$
is well-defined.  
\item[(b)] The element $g_{\blam}\in \GL(n,\FF_{q^2})$, in the conjugacy class given by $\blam \in \cP^{\Phi_2}_n$, is conjugate to an element in $\mathrm{U}(n,\FF_{q^2})$ if and only if the $\tilde{\Phi}$-partition $\bmu$ given by
$$\bmu^{(s\cup\tilde{s})}=\blam^{(s)}, \qquad\text{for $(s\cup \tilde{s})\in \tilde\Phi_2$,}$$
is well-defined.
\item[(c)] Let $q$ be odd.  The element $g_{\blam}\in \GL(2n,\FF_{q})$, in the conjugacy class given by $\blam \in \cP^{\Phi}_{2n}$, is conjugate to an element in $\mathrm{Sp}(2n,\FF_{q})$ if and only if the $\Phi^*$-partition $\bmu$ given by
$$\bmu^{(s\cup s^*)}=\blam^{(s)}, \qquad\text{for $(s\cup s^*)\in \Phi^*$,}$$
is well-defined and $m_j(\blam^{(1)}),m_j(\blam^{(-1)})\in 2\ZZ_{\geq 0}$ for all odd $j\in \ZZ_{\geq 1}$.
\item[(d)] Let $q$ be odd.  The element $g_{\blam}\in \mathrm{U}(2n,\FF_{q^2})$, in the conjugacy class given by $\blam \in \cP^{\tPhi_2}_{2n}$, is conjugate to an element in $\mathrm{Sp}(2n,\FF_{q})$ if and only if the $\Phi^*$-partition $\bmu$ given by
$$\bmu^{(s\cup s^*)}=\blam^{(s)}, \qquad\text{for $(s\cup s^*)\in \Phi^*$,}$$
is well-defined and $m_j(\blam^{(1)}),m_j(\blam^{(-1)})\in 2\ZZ_{\geq 0}$ for all odd $j\in \ZZ_{\geq 1}$.
\end{enumerate}
\end{lemma}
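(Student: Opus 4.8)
The plan is to deduce part (d) from part (c) by pushing every group in the diamond up to the common overgroup $\GL(2n,\FF_{q^2})$. Two preliminary facts drive the argument. First, a \emph{rationality} fact: for $H\in\{\GL(2n,\FF_q),\U(2n,\FF_{q^2})\}$, two elements of $H$ are conjugate in $H$ if and only if they are conjugate in $\GL(2n,\FF_{q^2})$; equivalently, a $\GL(2n,\FF_{q^2})$-class meets $H$ in at most one $H$-class. For $\GL(2n,\FF_q)$ this is rational canonical form, and for $\U(2n,\FF_{q^2})$ it is part of the Wall--Ennola description already quoted (it also follows from connectedness of centralizers in $\bar{G}_{2n}$ together with the Lang--Steinberg theorem). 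Second, a \emph{bookkeeping} fact: on the set $\Phi_2$ of $F^2$-orbits in $\bar{G}_1$ the three involutions $s\mapsto s^q$ (Frobenius, call it $F$), $s\mapsto s^{-1}$ ($\ast$) and $s\mapsto s^{-q}$ ($\sim$) satisfy $\sim\,=\,\ast\circ F=F\circ\ast$, so stability of a $\Phi_2$-partition under any two of $F,\ast,\sim$ forces stability under the third; moreover such a partition is stable under all three exactly when it descends to a well-defined $\Phi^\ast$-partition, and it is stable under $\sim$ alone exactly when it is the inflation of a $\tPhi_2$-partition.

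With the rationality fact, ``$g_\blam$ is conjugate to an element of $\Sp(2n,\FF_q)$'' is equivalent to ``the $\GL(2n,\FF_{q^2})$-class of $g_\blam$ contains an element of $\Sp(2n,\FF_q)$''. Write $\blam'\in\cP^{\Phi_2}_{2n}$ for that class; since $g_\blam\in\U(2n,\FF_{q^2})$, part (b) shows $\blam'$ is $\sim$-stable with $\blam^{(s\cup\tilde s)}=\blam'^{(s)}$. An element of $\Sp(2n,\FF_q)$ lies in $\GL(2n,\FF_q)$, so by part (a) the class $\blam'$ contains such an element if and only if $\blam'$ is $F$-stable and the unique $\GL(2n,\FF_q)$-class $\bmu\in\cP^{\Phi}_{2n}$ inside it (given by $\bmu^{(s\cup s^q)}=\blam'^{(s)}$) meets $\Sp(2n,\FF_q)$. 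By part (c), this happens precisely when the $\Phi^\ast$-partition built from $\bmu$ is well defined --- i.e. $\bmu^{(S)}=\bmu^{(S^\ast)}$ for all $S\in\Phi$, equivalently $\blam'$ is $\ast$-stable --- and $m_j(\bmu^{(1)})$ and $m_j(\bmu^{(-1)})$ lie in $2\ZZ_{\geq 0}$ for every odd $j$.

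It remains to rewrite these conditions directly in terms of $\blam$. Since $\blam'$ is already $\sim$-stable, the bookkeeping fact makes $F$-stability of $\blam'$, $\ast$-stability of $\blam'$, and well-definedness of the $\Phi^\ast$-partition $\bmu$ in the statement of part (d) all equivalent; this absorbs both the ``$F$-stable'' clause coming from part (a) and the ``$\Phi^\ast$-partition well defined'' clause coming from part (c) into the single such hypothesis of part (d). For the multiplicity conditions, the singleton orbits $\{1\}$ and $\{-1\}$ are fixed by $F$, $\ast$ and $\sim$, so $\bmu^{(1)}=\blam'^{(1)}=\blam^{(1)}$ and $\bmu^{(-1)}=\blam'^{(-1)}=\blam^{(-1)}$; hence ``$m_j(\bmu^{(\pm 1)})\in 2\ZZ_{\geq 0}$ for odd $j$'' is exactly the remaining hypothesis of part (d). Assembling these equivalences yields the statement, the hypothesis that $q$ is odd being inherited from part (c).

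The step I expect to be the real obstacle is the bookkeeping fact together with making the dictionary between $\tPhi_2$-, $\Phi$-, $\Phi_2$- and $\Phi^\ast$-indexed partitions completely precise: one must check that an $F$- and $\sim$-stable $\Phi_2$-partition descends to a well-defined $\Phi^\ast$-partition no matter how individual $F^2$-orbits split or merge upon passing to $F$-orbits and to $\ast$-orbits, and that this descent agrees with the formula $\bmu^{(s\cup s^\ast)}=\blam^{(s)}$. This is exactly the point where the translation of Wall's $\Sp$-in-$\GL(2n,\FF_q)$ analysis into the unitary setting (as in \cite{fulconj}) is invoked; by contrast, the rationality fact for $\U(2n,\FF_{q^2})$, though standard, should be cited with care, since it is what licenses detecting $\U(2n,\FF_{q^2})$-conjugacy inside $\GL(2n,\FF_{q^2})$.
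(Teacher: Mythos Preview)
Your proposal is correct and follows essentially the same route as the paper: parts (a)--(c) are cited (rational canonical form for (a), Wall for (b) and (c)), and part (d) is deduced by pushing the $\U(2n,\FF_{q^2})$-class up to $\GL(2n,\FF_{q^2})$ via (b), then down to $\GL(2n,\FF_q)$ via (a), and finally applying (c), with the orbit bookkeeping $\sim\,=\,\ast\circ F$ doing the translation. Your write-up is in fact more explicit than the paper's about the rationality fact and the dictionary between the various orbit sets; the paper simply observes that $\Sp(2n,\FF_q)\subseteq\GL(2n,\FF_q)$ and that $\tilde r^\ast=r^q$, leaving the compatibility checks implicit.
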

\begin{proof}
Part (a) follows from considering Jordan rational forms, or elementary divisors, as in \cite[IV.2]{Mac}, while parts (b) and (c) are due to Wall \cite{wall}. 

(d) Suppose $g_{\blam}\in \mathrm{U}(2n,\FF_{q^2})$, $\bmu$ is well-defined, and and $m_j(\blam^{(1)}),m_j(\blam^{(-1)})\in 2\ZZ_{\geq 0}$ for all odd $j\in \ZZ_{\geq 1}$.  Since $g_{\blam}\in \GL(2n,\FF_{q^2})$, (b) implies there exists $\bnu\in \cP^{\Phi_2}_{2n}$ such that
$$\blam^{(s)}=\bnu^{(r)}, \qquad\text{for $s=(r\cup \tilde{r})\in \tilde\Phi_2$.}$$
However, by assumption there is a $\Phi$-partition $\bgamma$ such that
$$\bgamma^{(r\cup \tilde{r}^*=r^q)}=\bnu^{(r)},\qquad\text{for $(r \cup r^q)\in \Phi$,}$$
is well-defined.  Thus, $g_{\blam}$ is conjugate to an element in $\GL(2n,\FF_q)$ and the result follows from (c).

Suppose $g_{\blam}\in \mathrm{U}(2n,\FF_{q^2})$ is conjugate to an element in $\Sp(2n,\FF_q)$.  Since $\Sp(2n,\FF_q)\subseteq \GL(2n,\FF_q)$, we have that $g_{\blam}$ is conjugate to an element of $\GL(2n,\FF_q)$, and the result follows from (c).
\end{proof}

We note that in each of these groups, the corresponding conjugacy class is unipotent exactly when $\blam^{(1)}$ is the only nonempty partition of the $\Phi_k$-partition or $\tPhi_2$-partition.

For the rest of this section, we let $q$ be odd.  The conjugacy classes of $\Sp(2n,\FF_q)$ are not parameterized by $\Phi^*$-partitions $\blam$ of size $2n$ such that $m_j(\blam^{(1)}),m_j(\blam^{(-1)})\in 2\ZZ_{\geq 0}$ for all odd $j\in \ZZ_{\geq 1}$, as might be suggested by Lemma \ref{UpDownConjugacy}.  Instead, we need to distinguish between some classes in $\Sp(2n,\FF_q)$ that may be conjugate in $\GL(2n,\FF_q)$.  The development we give here follows \cite{fulconj}.

 A \emph{symplectic signed partition} is a partition $\lambda$ with a function
$$\delta:\{2i\in 2\ZZ_{\geq 1}\ \mid\ m_{2i}(\lambda)\neq 0\}\longrightarrow \{\pm 1\}$$
such that $m_j(\lambda)\in 2\ZZ_{\geq 0}$ for all odd $j\in \ZZ_{\geq 1}$.

For example,
$$\gamma = (5,5,4^{-},4^-,3,3,3,3,2^{+}, 2^+, 2^+, 1, 1) = (5^2, 4^{-2}, 3^4, 2^{+3}, 1^2),$$
is a symplectic signed partition, where the parts of size $4$ are assigned the sign $-$ and the parts of size $2$ the sign $+$.  For even $i$, the multiplicity of $i$ in $\gamma$, $m_i(\gamma)$, will be given the sign that the parts $i$ are assigned.  In the example above, we have $m_4(\gamma) = -2$, ${\rm sgn}(m_4(\gamma)) = -$, $m_2(\gamma) = +2$, and ${\rm sgn}(m_2(\gamma)) = +$.  Let $\cP^\pm$ denote the set of all symplectic signed partitions.

If $\gamma$ is a symplectic signed partition, we let $\gamma^{\circ}$ denote the partition obtained by ignoring the signs of the sets of even parts of $\gamma$.  We define various functions on symplectic signed partitions as its value on $\gamma^{\circ}$, for example, we define $|\gamma|$ and $n(\gamma)$ as $|\gamma^{\circ}|$ and $n(\gamma^{\circ})$, respectively.  Define $\cP^\pm_n$ as the set of symplectic signed partitions of size $n$.

Let $\cP^{\pm\Phi^\ast}$ denote the set of functions $\blam: \Phi^* \rightarrow \cP \cup \cP^\pm$ such that
\begin{enumerate}
\item[(a)] $\blam^{(1)},\blam^{(-1)}\in \cP^\pm$ are symplectic signed partitions,
\item[(b)] $\blam^{(r)}\in \cP$ for $r \in \Phi^*$, $r \neq \pm 1$.
\end{enumerate}
For $\blam \in \cP^{\pm\Phi^\ast}$, define
$$|\blam| = \sum_{r \in \Phi^*} |r|\,|\blam^{(r)^{\circ}}|,$$
and let 
$$\cP^{\pm\Phi^\ast}_{2n} = \{ \blam \in \cP^{\pm\Phi^\ast} \;\; | \;\; |\blam| = 2n \}.$$

Then from results of Wall \cite{wall},
$$\left\{\begin{array}{c} \text{Conjugacy classes}\\ \text{of $\Sp(2n,\FF_{q})$}\end{array}\right\} \longleftrightarrow \cP^{\pm\Phi^*}_{2n}$$
such that the conjugacy class corresponding to $\blam\in  \cP^{\pm\Phi^*}$ has characteristic polynomial 
$$\prod_{r\in  \Phi^*} f_r^{|\blam^{(r)}|}.$$

Alternatively, we may consider the set $\cP^{\pm\Phi}$ of functions $\bgamma: \Phi \rightarrow \cP \cup \cP^\pm$, such that
\begin{enumerate}
\item[(a)] $\bgamma^{(1)},\bgamma^{(-1)}\in \cP^\pm$ are symplectic signed partitions,
\item[(b)] $\bgamma^{(r)} = \bgamma^{(r^*)}$ for all $r \in \Phi$,
\item[(c)] $\bgamma^{(r)} \in \cP$ for $r \in \Phi$, $r \neq \pm 1$.
\end{enumerate}
If we define $|\bgamma|$ and $\cP^{\pm\Phi}_{2n}$ analogously, then the set $\cP^{\pm\Phi}_{2n}$ also parameterizes the conjugacy classes of ${\rm Sp}(2n, \FF_q)$.  In this case, if $g$ is an element of ${\rm Sp}(2n, \FF_q)$ in the conjugacy class corresponding to $\bgamma \in \cP^{\pm\Phi}_{2n}$, then the conjugacy class of $g$ in ${\rm GL}(2n, \FF_q)$ is given by $\blam \in \cP^{\Phi}_{2n}$, where $\blam^{(r)} = \bgamma^{(r)^\circ}$ for $r = \pm 1$, and $\blam^{(r)} = \bgamma^{(r)}$ otherwise.

\subsection{Centralizers}

For $g$ in a group $G$, let
$$C_G(g)=\{h\in G\ \mid\ gh=hg\},$$
denote the centralizer of $g$ in $G$. 

\begin{theorem}[Ennola, Wall] \label{wall} 
Let $g_{\bmu} \in \U(n,\FF_{q^2})$ be in the conjugacy class indexed by $\bmu \in \cP^{\tilde\Phi_2}_{n}$.  The order of the centralizer of $g_{\bmu}$ in $\U(n,\FF_{q^2})$ is 
$$a_{\bmu}=(-1)^{|\bmu|}\prod_{s\in \tilde\Phi_2}a_{\bmu^{(s)}} \bigl((-q)^{|s|}\bigr),\text{ where }
a_{\mu}(x) = x^{|\mu| + 2n(\mu)} \prod_i \prod_{j = 1}^{m_i} (1 - (1/x)^j),$$
for $\mu=(1^{m_1}2^{m_2}3^{m_3}\cdots)\in \cP$. 
\end{theorem}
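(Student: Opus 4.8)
The plan is to compute the centralizer geometrically inside $\GL(n,\FF_{q^2})$: decompose the Hermitian space $V=\FF_{q^2}^n$ on which $g_{\bmu}$ acts into orthogonal summands indexed by the $\sim$-orbits $s\in\tilde\Phi_2$, and reduce to two ``building-block'' centralizers — one of general linear type, one of unitary type — whose orders come from the automorphism count for finite modules over a discrete valuation ring. This is the classical route of Wall \cite{wall}, the character-theoretic shadow being Ennola's substitution $q\mapsto -q$.

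First I would fix a nondegenerate Hermitian form $h$ on $V$, relative to the involution $x\mapsto x^q$ of $\FF_{q^2}$, preserved by $g:=g_{\bmu}$, and regard $V$ as an $\FF_{q^2}[t]$-module with $t$ acting as $g$. Because $h$ is semilinear in its first variable and $g$-invariant, a generalized eigenvector of $g$ for an eigenvalue $x$ is $h$-orthogonal to every generalized eigenvector except possibly those for the eigenvalue $x^{-q}$; hence $h$ pairs the $f_{s'}$-primary summand of $V$ with the $f_{\tilde{s'}}$-primary summand. Grouping the primary summands along $\sim$-orbits therefore yields an orthogonal decomposition $V=\bigoplus_{s\in\tilde\Phi_2}V_s$ with each $h|_{V_s}$ nondegenerate, and
$$ C_{\U(n,\FF_{q^2})}(g)\;=\;\prod_{s\in\tilde\Phi_2} C_{\U(V_s,\,h|_{V_s})}\bigl(g|_{V_s}\bigr). $$

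Next I would evaluate each factor. If $s=\{s',\tilde{s'}\}$ with $s'\neq\tilde{s'}$, so $|s|=2\deg f_{s'}$ is even, then $V_s=V_{s'}\oplus V_{\tilde{s'}}$ is a sum of two totally isotropic $\FF_{q^2}[g]$-submodules in perfect $h$-duality; an isometry commuting with $g$ is determined by an arbitrary $\FF_{q^2}[g]$-automorphism of $V_{s'}$ (its action on $V_{\tilde{s'}}$ being the forced inverse adjoint), so the factor is $\Aut_{\FF_{q^2}[g]}(V_{s'})$. As $V_{s'}$ is an $f_{s'}$-primary module of type $\bmu^{(s)}$ over a discrete valuation ring with residue field of order $q^{2\deg f_{s'}}$, this automorphism group has order $a_{\bmu^{(s)}}\bigl(q^{2\deg f_{s'}}\bigr)=a_{\bmu^{(s)}}\bigl((-q)^{|s|}\bigr)$, by the count in \cite[IV.2]{Mac}. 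If instead $s=\{s'\}$ is $\sim$-fixed, a short computation with the multiplicative order of $q$ shows that $d:=\deg f_{s'}=|s|$ is odd, that $h|_{V_s}$ is a nondegenerate Hermitian form for the involution induced on $A:=\FF_{q^2}[g|_{V_s}]$ (which on the semisimple reduction $\FF_{q^{2d}}$ of $A$ is the nontrivial automorphism of $\FF_{q^{2d}}/\FF_{q^d}$), and that the factor is the isometry group of the corresponding Hermitian $A$-module. By the unitary analogue of the previous count — Wall's Hermitian-module computation — its order is $(-1)^{|\bmu^{(s)}|}a_{\bmu^{(s)}}\bigl(-q^{d}\bigr)=(-1)^{|\bmu^{(s)}|}a_{\bmu^{(s)}}\bigl((-q)^{|s|}\bigr)$, using that $d$ is odd; the extra sign and the $q\mapsto -q$ are exactly what separate $|\U_m(\FF_{Q^2})|$ from $|\GL_m(\FF_Q)|$.

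Finally I would multiply the factors. The signs combine to $(-1)^{e}$ with $e=\sum_{s\ \sim\text{-fixed}}|\bmu^{(s)}|$, and since $|s|$ is even for the non-fixed orbits and odd for the fixed ones, $e\equiv\sum_{s\in\tilde\Phi_2}|s|\,|\bmu^{(s)}|=|\bmu|\pmod 2$; so the product is $(-1)^{|\bmu|}\prod_{s}a_{\bmu^{(s)}}((-q)^{|s|})$, as claimed. The main obstacle is the $\sim$-fixed case: one must set up the involution on the (typically non-reduced) ring $\FF_{q^2}[g|_{V_s}]$ correctly, verify that $h|_{V_s}$ is a nondegenerate Hermitian form relative to it — invoking the classification of such forms over finite fields — and then carry out the order count for the isometry group of a Hermitian module of prescribed type; the general linear factors and the orthogonality of $V=\bigoplus_s V_s$ are comparatively routine. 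A more structural but heavier alternative would realize $\U(n,\FF_{q^2})$ as a fixed-point subgroup $\bar G^{F_1}$ for a twisted Frobenius $F_1$ with $F_1^2=F^2$, decompose the connected reductive centralizer $C_{\bar G}(s)$ of the semisimple part $s$ of $g$ into $F_1$-orbits of its $\GL$-factors, and read off the same product from the known unipotent-centralizer orders in $\GL_m$ and $\U_m$.
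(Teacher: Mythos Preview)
The paper does not prove this theorem; it is stated as a known result due to Ennola \cite{ennolaconj} and Wall \cite{wall}, with no argument given. So there is no ``paper's own proof'' to compare against.

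Your outline is the classical Wall approach and is correct in its essentials: the orthogonal decomposition $V=\bigoplus_{s}V_s$ via the Hermitian pairing of primary components for $s'$ and $\tilde{s'}$, the reduction of non-fixed orbits to $\GL$-type automorphism counts over the residue field of size $q^{|s|}$, and the reduction of $\sim$-fixed orbits to unitary-type counts over $\FF_{q^{|s|}}$. Your parity argument that a $\sim$-fixed $s\in\Phi_2$ has odd $|s|$ (because the involution $x\mapsto x^{-q}$ squares to $F^2$, which acts as a $|s|$-cycle on $s$, forcing $|s|$ odd) is right, and the sign bookkeeping at the end is clean. The one place that genuinely requires work, as you flag, is the $\sim$-fixed case: one must check that the induced involution on the local ring $\FF_{q^2}[t]/(f_{s'}^{\lambda_i})$ is the right one, and then invoke Wall's Hermitian-module centralizer computation rather than just asserting the formula. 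With that granted, your sketch is a faithful summary of the literature proof the paper is citing.
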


Let $q$ be odd, and let $\bgamma \in \cP^{\pm\Phi}$.  For each $r \in \Phi$, define $A^*(r, \bgamma^{(r)}, i)$ as follows, where we let $m_i = m_i(\bgamma^{(r)})$.
\begin{equation} \label{Atildedefn}
A^*(r, \bgamma^{(r)}, i) = \left\{\begin{array}{ll}\big|{\rm U}(m_i, \FF_{q^{|r|}})\big| &\text{if $r = r^*$, $r \neq \pm1$,}\\ \big|{\rm GL}(m_i, \FF_{q^{|r|}})\big|^{1/2} & \text{if $r \neq r^*$, $r \neq \pm1$,} \\ \big|{\rm Sp}(m_i, \FF_q)\big| & \text{if $r = \pm1$, $i$ odd,} \\ q^{|m_i|/2} \big|{\rm O}^{{\rm sgn}(m_i)}(|m_i|, \FF_q)\big| & \text{if $r = \pm1$, $i$ even.}\end{array}\right.
\end{equation}
In the case that $r=\pm 1$ and $i$ and $m_i$ are both even, ${\rm sgn}(m_i)$ says whether to take the split or non-split orthogonal group, and when $m_i$ is odd, this will always give the unique orthogonal group regardless of the sign of $m_i$.  For each $r \in \Phi$, we define $B^*(r, \bgamma^{(r)})$ as
\begin{equation} \label{Btildedefn}
B^*(r, \bgamma^{(r)}) = q^{|r| ( |\bgamma^{(r)}|/2 + n(\bgamma^{(r)}) - \sum_i m_i^2/2 )} \prod_i A^*(r, \bgamma^{(r)}, i).
\end{equation}
These factors are defined in order to state the following result of Wall \cite{wall}, which tells us the size of a given conjugacy class of $\Sp(2n,\FF_q)$.

\begin{theorem} [Wall] \label{spcent}
Let $g \in \Sp(2n,\FF_q)$ be in the conjugacy class parameterized by $\bgamma \in \cP^{\pm\Phi}_{2n}$.  Then the order of the centralizer of $g$ in $\Sp(2n,\FF_q)$ is given by
$$ |C_{\Sp(2n,\FF_q)}(g)| = \prod_{r \in \Phi} B^*(r, \bgamma^{(r)}).$$
\end{theorem}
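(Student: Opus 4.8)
The plan is to reduce the computation to the primary decomposition of $g$ and to identify the centralizer on each primary block with a classical group extended by a unipotent radical, so that the exponent of $q$ in $B^*(r,\bgamma^{(r)})$ turns out to be exactly the $\FF_q$-dimension of that radical. Write $V=\FF_q^{2n}$ with its symplectic form $\langle\ ,\ \rangle$ and let $V=\bigoplus_r V_r$ be the decomposition into generalized eigenspaces, where $V_r$ is the $f_r$-primary part of $V$ as an $\FF_q[g]$-module. Since $g$ is an isometry, $V_r$ is orthogonal to $V_{r'}$ unless $f_{r'}=f_{r^*}$ up to a scalar, so $V$ is an orthogonal direct sum $V=V_1\perp V_{-1}\perp\bigl(\textstyle\bigoplus_{r=r^*\neq\pm1}V_r\bigr)\perp\bigl(\textstyle\bigoplus_{\{r,r^*\},\,r\neq r^*}(V_r\oplus V_{r^*})\bigr)$ of nondegenerate subspaces, with $V_r,V_{r^*}$ totally isotropic and in perfect duality when $r\neq r^*$. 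Hence $C_{\Sp(V)}(g)$ is the direct product of the centralizers of $g$ on these blocks, and it suffices to handle each block separately.

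For a block $V_r\oplus V_{r^*}$ with $r\neq r^*$: an isometry commuting with $g$ is uniquely determined by its restriction to $V_r$, which may be an arbitrary $\FF_q[g]$-automorphism of $V_r$, so this centralizer is isomorphic to $\Aut_{\FF_q[g]}(V_r)$. Viewing $V_r$ as a module of type $\bgamma^{(r)}$ over the complete discrete valuation ring $R_r$ with residue field $\FF_{q^{|r|}}$ (the completion of $\FF_q[t]$ at $(f_r)$, with $t$ acting as $g$), its automorphism group has order $a_{\bgamma^{(r)}}(q^{|r|})$, with $a_\mu$ as in Theorem~\ref{wall}. Using $|\mu|+2n(\mu)=\sum_i(\mu_i')^2$ and $|\GL(m,\FF_Q)|=Q^{m^2}\prod_{j=1}^m(1-Q^{-j})$, one rewrites this as $q^{|r|(|\bgamma^{(r)}|+2n(\bgamma^{(r)})-\sum_i m_i^2)}\prod_i|\GL(m_i,\FF_{q^{|r|}})|=B^*(r,\bgamma^{(r)})\,B^*(r^*,\bgamma^{(r^*)})$ (using $\bgamma^{(r)}=\bgamma^{(r^*)}$), i.e.\ exactly the combined contribution of $r$ and $r^*$ to the asserted product.

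For a block $V_r$ with $r=r^*\neq\pm1$: here $V_r$ is nondegenerate and $f_r$ is self-reciprocal of even degree $|r|$, so the form together with $g$ endows the centralizer algebra $E=\End_{\FF_q[g]}(V_r)$ with an anti-involution $\epsilon$ which on the residue field $\FF_{q^{|r|}}$ of $\FF_q[g]|_{V_r}$ restricts to the nontrivial automorphism over $\FF_{q^{|r|/2}}$ induced by $t\mapsto t^{-1}$. One checks that $C_{\Sp(V_r)}(g|_{V_r})$ is the unitary group $\{u\in E^\times:u\,\epsilon(u)=1\}$, and analyzing the associated hermitian module over the completion of $\FF_{q^{|r|/2}}[t]$ layer by layer exhibits a unipotent radical of $\FF_q$-dimension $|r|\bigl(|\bgamma^{(r)}|/2+n(\bgamma^{(r)})-\textstyle\sum_i m_i^2/2\bigr)$ with reductive quotient $\prod_i\U(m_i,\FF_{q^{|r|}})$, which is $B^*(r,\bgamma^{(r)})$. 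For a block $r=\pm1$: $g|_{V_r}=\pm u$ with $u$ unipotent in $\Sp(V_r)$, and the Springer--Steinberg/Wall description of unipotent centralizers in symplectic groups in odd characteristic expresses $C_{\Sp(V_r)}(u)$ as a unipotent radical of dimension $|\bgamma^{(r)}|/2+n(\bgamma^{(r)})-\sum_i m_i^2/2$ extended by $\prod_{i\text{ odd}}\Sp(m_i,\FF_q)\times\prod_{i\text{ even}}\mathrm{O}^{\mathrm{sgn}(m_i)}(m_i,\FF_q)$, where the orthogonal type on each even layer is precisely the sign recorded by the symplectic signed partition $\bgamma^{(r)}$ and the residual $q^{m_i/2}$ factors come from the orthogonal-versus-similitude normalization on those layers; this gives $B^*(\pm1,\bgamma^{(\pm1)})$. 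Multiplying the block contributions over all $r\in\Phi$ produces the formula.

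I expect the main obstacle to be the two form-theoretic block types: showing that the adjoint anti-involution on the centralizer algebra of a self-reciprocal primary block yields a unitary group over $\FF_{q^{|r|/2}}$ rather than another symplectic or orthogonal group, and --- the genuinely delicate point --- identifying the orthogonal type invariants $\mathrm{sgn}(m_i)$ on the even layers of the $\pm1$ blocks with the $\pm$ data of the symplectic signed partition while pinning down the exact power of $q$ (including the $q^{m_i/2}$ corrections). These are exactly the points worked out by Wall \cite{wall}; for our purposes the efficient route is to invoke his classification of the conjugacy classes and centralizers of $\Sp(2n,\FF_q)$ directly and then transcribe the outcome into the $\cP^{\pm\Phi}$ parametrization set up above, so that the "proof" is primarily a translation of notation rather than a new argument.
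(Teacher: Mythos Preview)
The paper does not actually prove this theorem: it is stated as Wall's result with a citation to \cite{wall} and no argument, followed only by a brief remark recording the partition identity
\[
\sum_{i<j} i\,m_i m_j + \tfrac12\sum_i (i-1)m_i^2 \;=\; |\gamma|/2 + n(\gamma) - \sum_i m_i^2/2
\]
used to rewrite Wall's centralizer formula in the $B^*$ notation. Your proposal, by contrast, sketches the genuine proof (primary decomposition of $V$ under $g$, orthogonality relations among the blocks, and the identification of the centralizer on each block type with $\GL$, $\U$, or $\Sp\times\mathrm{O}$ extended by a unipotent radical), and only at the end concedes that the delicate points are Wall's and that the exercise is ``primarily a translation of notation.'' So your final stance agrees with the paper's, but you have supplied far more mathematics than the paper does; the paper's ``proof'' is literally just the citation plus the exponent-rewriting remark. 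If you want to match the paper, the entire first three paragraphs of your sketch are superfluous: one sentence invoking \cite{wall} and one line displaying the partition identity above suffice.
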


\noindent
{\bf Remark.  } We have applied the following identity for partitions in order to state the results of Wall in the form given in Theorem \ref{spcent}.  For any $\gamma \in \cP$, we have
$$ \sum_{i < j} im_i(\gamma)m_j(\gamma) + \frac{1}{2} \sum_i (i-1)m_i(\gamma)^2 = |\gamma|/2 + n(\gamma) - \sum_i m_i(\gamma)^2/2.$$

Note that an element $s \in \tilde{\Phi}_2$ satisfies either $s\in \Phi_2$, or $s=v\cup \tilde{v}$ where $v,\tilde{v}\in \Phi_2$ with $v\neq \tilde{v}$.  For $\bnu \in \cP^{\tilde\Phi_2}$, and $s \in \tilde\Phi_2\setminus\{\pm 1\}$, let 
\begin{equation} \notag
A(s, \bnu^{(s)}, i) = \left\{\begin{array}{ll}\big|{\rm U}(m_i, \FF_{q^{2|s|}})\big|^{1/2} &\text{if $|s|$ is odd,}\\ \big|{\rm GL}(m_i, \FF_{q^{|s|}})\big|^{1/2} & \text{if $|s|$ is even, $s \neq s^*$,} \\ \big|{\rm U}(m_i, \FF_{q^{|s|}})\big| & \text{if $s = v \cup\tilde{v}$, $v= v^*$,} \\ \big|{\rm GL}(m_i, \FF_{q^{|s|/2}})\big| & \text{if $s = v \cup \tilde{v}$, $v = \tilde{v}^*$,}\end{array}\right.
\end{equation}
\begin{equation} \label{ABdefn}
B(s, \bnu^{(s)}) = q^{|s| ( |\bnu^{(s)}|/2 + n(\bnu^{(s)}) - \sum_i m_i^2/2 )} \prod_i A(s, \bnu^{(s)}, i),
\end{equation}
where $m_i = m_i(\bnu^{(s)})$. 

\begin{theorem} \label{spucent}
Let $g \in \Sp(2n,\FF_q)$ be in the $\U(2n,\FF_{q^2})$-conjugacy class parameterized by $\bnu \in \cP^{\tilde\Phi_2}_{2n}$, and in the $\Sp(2n, \FF_q)$-conjugacy class parameterized by $\bgamma \in \cP^{\pm\Phi}_{2n}$.  Then the order of the centralizer of $g$ in $\Sp(2n,\FF_q)$ is given by
$$|C_{\Sp(2n,\FF_q)}(g)| = B^*(1, \bgamma^{(1)}) B^*(-1, \bgamma^{(-1)}) \prod_{ s \in \tilde\Phi_2\setminus\{\pm 1\}} B(s, \bnu^{(s)}).$$
\end{theorem}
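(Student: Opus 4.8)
The plan is to read off $|C_{\Sp(2n,\FF_q)}(g)|=\prod_{r\in\Phi}B^*(r,\bgamma^{(r)})$ from Theorem~\ref{spcent} and to split off the two factors with $r=\pm1$, which occur verbatim in the asserted formula; what then remains is the identity
$$\prod_{r\in\Phi\setminus\{\pm1\}}B^*(r,\bgamma^{(r)})\;=\;\prod_{s\in\tilde\Phi_2\setminus\{\pm1\}}B(s,\bnu^{(s)}).$$
Note that $\bgamma^{(r)}\in\cP$ for $r\neq\pm1$, so only the first two branches of \eqref{Atildedefn} enter on the left.

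To compare the two products I would pass through the $\GL(2n,\FF_{q^2})$-conjugacy class of $g$, parameterized by some $\btau\in\cP^{\Phi_2}_{2n}$. Since $g$ lies in $\GL(2n,\FF_q)$ and in $\U(2n,\FF_{q^2})$, Lemma~\ref{UpDownConjugacy}(a),(b) together with the last paragraph of Section~\ref{conjclasses} force $\btau$ to be invariant under the two commuting involutions $\sigma\colon s\mapsto s^q$ and $\iota\colon s\mapsto s^{-q}=\tilde s$ of $\Phi_2$ (hence also under $\sigma\iota\colon s\mapsto s^{-1}$), with $\bgamma^{(s\cup s^q)}=\btau^{(s)}$ whenever $s\cup s^q\neq\pm1$ and $\bnu^{(s\cup\tilde s)}=\btau^{(s)}$. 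The $\sigma$-orbits on $\Phi_2\setminus\{\pm1\}$ correspond bijectively to $\Phi\setminus\{\pm1\}$ via $s\mapsto s\cup s^q$, the $\iota$-orbits to $\tilde\Phi_2\setminus\{\pm1\}$ via $s\mapsto s\cup\tilde s$, and $\btau$ is constant on each $\langle\sigma,\iota\rangle$-orbit $O$ on $\Phi_2\setminus\{\pm1\}$, say with value $P$. So it is enough to prove, for each such $O$,
$$\prod_{r}B^*(r,P)\;=\;\prod_{t}B(t,P),$$
the left product taken over the $\sigma$-orbits $r$ contained in $O$ and the right over the $\iota$-orbits $t$ contained in $O$.

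Each $B$ factors as $B^*(r,P)=q^{|r|E}\prod_iA^*(r,P,i)$ and $B(t,P)=q^{|t|E}\prod_iA(t,P,i)$ by \eqref{Btildedefn} and \eqref{ABdefn}, with the \emph{same} exponent $E=|P|/2+n(P)-\sum_i m_i(P)^2/2$, which depends only on $P$. Writing $d=|s|$ for the common $F^2$-orbit size of the members $s$ of $O$, one checks that $\sum_r|r|=|O|\,d=\sum_t|t|$, so the powers of $q$ match automatically and the problem reduces to $\prod_rA^*(r,P,i)=\prod_tA(t,P,i)$ for each $i$. As $O$ contains neither $1$ nor $-1$, the only possible orbit types are $|O|=2$ with stabilizer $\langle\sigma\rangle$, $|O|=2$ with stabilizer $\langle\iota\rangle$ (each forcing $d$ odd), and $|O|=4$; in these three cases both products collapse, using only $x^{1/2}x^{1/2}=x$, to $|\GL(m_i,\FF_{q^d})|$, $|\U(m_i,\FF_{q^{2d}})|$, and $|\GL(m_i,\FF_{q^{2d}})|$ respectively, where $m_i=m_i(P)$.

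I expect the main obstacle to be the bookkeeping in this last step, namely pinning down exactly which branch of the piecewise definition of $A^*$ in \eqref{Atildedefn} and of $A$ (the display above \eqref{ABdefn}) is active for each orbit type. This rests on a few elementary facts about the action of $x\mapsto x^{\pm q}$ on roots of unity: for $v\neq\pm1$ the relation $v=\tilde v$ forces $v$ to have odd $F^2$-orbit size; an $F$-orbit $r\neq\pm1$ satisfies $r=r^*$ only when $|r|$ is even; and the stabilizers $\langle\sigma\iota\rangle$ and $\langle\sigma,\iota\rangle$ cannot occur for $v\neq\pm1$. Together these make the self-dual versus non-self-dual dichotomy on the $\Sp$ side line up with the four-way case split on the $\U$ side, after which the orbit-by-orbit verification is routine.
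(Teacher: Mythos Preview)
Your approach is the same as the paper's---reduce via Theorem~\ref{spcent} and then match the $B^*$-factors over $\Phi\setminus\{\pm1\}$ with the $B$-factors over $\tilde\Phi_2\setminus\{\pm1\}$ by a case analysis---and your organization through the $\langle\sigma,\iota\rangle$-orbits on $\Phi_2$ is a clean way to package that analysis. However, your enumeration of orbit types has a genuine gap: the stabilizer $\langle\sigma\iota\rangle$ \emph{does} occur for $v\in\Phi_2\setminus\{\pm1\}$. Take $\alpha$ a primitive $(q^2+1)$-th root of unity; then $\alpha\in\FF_{q^4}\setminus\FF_{q^2}$, so $v=\{\alpha,\alpha^{q^2}\}=\{\alpha,\alpha^{-1}\}$ has $d=|v|=2$, $v^{-1}=v$, and $v^q\neq v$ (since $\alpha^{q\pm1}\neq1$), hence $\sigma\iota(v)=v$ while $\sigma(v)\neq v$ and $\iota(v)\neq v$. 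The fact you quote, that an $F$-orbit $r\neq\pm1$ with $r=r^*$ must have $|r|$ even, is correct, but it does not rule out an $F^2$-orbit $v$ of \emph{even} size with $v=v^{-1}$; it only shows that if in addition $\sigma(v)=v$ (so $v$ is an $F$-orbit of odd size) then $v=\pm1$, which is precisely the exclusion of the full stabilizer $\langle\sigma,\iota\rangle$, not of $\langle\sigma\iota\rangle$.

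This missing orbit type is exactly the paper's third case: $s=v\cup\tilde v\in\tilde\Phi_2$ with $v=v^*$, whence $\tilde v=v^q$ and $s$ is itself a single $F$-orbit. In your framework $O=\{v,\tilde v\}$ then contains a single $\sigma$-orbit, giving $r=s\in\Phi$ with $r=r^*$ and $|r|=2d$, and a single $\iota$-orbit, giving $t=s$ with $|t|=2d$ landing in the ``$v=v^*$'' branch of the definition of $A$; both $A^*(r,P,i)$ and $A(t,P,i)$ equal $|\U(m_i,\FF_{q^{2d}})|$. So the repair is immediate once the case is recognized, but as written your three-case list is incomplete.
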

\begin{proof} From Theorem \ref{spcent}, we just need to prove that
\begin{equation} \label{resprod}
\prod_{ r \in \Phi\setminus\{\pm 1\}} B^*(r, \bgamma^{(r)}) = \prod_{ s\in \tilde\Phi_2\setminus\{\pm1\}}  B(s, \bnu^{(s)}).
\end{equation}

Suppose $s\in \tilde\Phi_2\setminus\{\pm 1\}$ satisfies $s\in \Phi_2$.  Note that since $|s|$ is odd, the relation $s^*=s$ would imply the existence of some $s_i \in s$ such that $s_i^{-1}=s_i$.  However, $\pm 1\notin s$, so such an $s_i$ cannot exist.  Thus, $s^*\neq s$.  Let $r=s\cup s^*$ so that $r\in \Phi$, $r^*=r$ and $|r|=2|s|$.  Then
$\bgamma^{(r)}=\bnu^{(s)}=\bnu^{(s^*)}$ and by definition
$$B^*(r, \bgamma^{(r)}) = B(s, \bnu^{(s)})B(s^*, \bnu^{(s^*)}).$$

Suppose that $s = v \cup\tilde{v}$ with $v\neq \tilde{v}\in \Phi_2$, and that $s \neq s^*$.  Let $r = v\cup \tilde{v}^*$, so that $r^* = v^* \cup\tilde{v} \neq r$.  Then we have $r \in \Phi$, $|r| = |s|$, $\bnu^{(s)} = \bgamma^{(r)}$, and
$$ B^*(r, \bgamma^{(r)}) = B(s, \bnu^{(s)}) \;\; \text{ and } \;\; B^*(r^*, \bgamma^{(r)}) = B(s^*, \bnu^{(s^*)}).$$

Finally, suppose that $s=v\cup\tilde{v}$ with $v\neq \tilde{v}\in \Phi_2$ and $s = s^*$.  Either $v = v^*$ or $v = \tilde{v}^*$.  If $v = v^*$, then in fact we have $\tilde{s}^* = s$, so that $s\in \Phi$.  Letting $r = s$, we have $\bgamma^{(r)} = \bnu^{(s)}$, and
$$ B^*(r, \bgamma^{(r)}) = B(s, \bnu^{(s)}).$$
If $v = \tilde{v}^*$, then we have $v \in \Phi$, and $v \neq v^*$.  Letting $r = v$, we have $|r| = |s|/2$, and $\bnu^{(s)} = \bgamma^{(r)} = \bgamma^{(r^*)}$, so that
$$ B^*(r, \bgamma^{(r)}) B^*(r^*, \bgamma^{(r^*)}) = B(s, \bnu^{(s)}).$$
This exhausts all factors in both sides of (\ref{resprod}), and the desired result is obtained. 
\end{proof}

\section{Character sums} \label{charsums}

This section contains our main results, including formulas for
$$\Ind_{\Sp(2n,\FF_q)}^{\U(2n,\FF_{q^2})}(\mathbf{1})(g),\quad\text{for $g \in \U(2n, \FF_{q^2})$,}\quad \text{and}\quad \sum_{\chi\in \Irr(\U(n,\FF_{q^2}))} \chi(g), \quad \text{for $g \in \U(n, \FF_{q^2})$},$$
in Theorem \ref{permcharvals} and Theorem \ref{charvalues}, respectively.

\subsection{Deligne-Lusztig induction and a model for $\U(n,\FF_{q^2})$}

For any class function $\chi$ of $\U(n, \FF_{q^2})$, and any conjugacy class of $\U(n, \FF_{q^2})$ corresponding to $\bnu \in \cP^{\tPhi_2}_{n}$, we let $\chi(\bnu)$ denote the value of $\chi$ evaluated on any element of the conjugacy class corresponding to $\bnu$. 

Let $\Gamma_{(n)}$ denote the Gelfand-Graev character of $\U(n,\FF_{q^2})$, which by \cite{TV06} takes values given by the character formula
\begin{equation}\label{UGGFormula}
\Gamma_{(n)}(\bmu)=\left\{\begin{array}{ll} (-1)^{\lfloor n/2 \rfloor} \prod_{i = 1}^{\ell(\mu)} \bigl( 1-(-q)^i \bigr) & \text{if $\bmu$ is unipotent and $\mu=\bmu^{(1)}$,}\\ 0 & \text{otherwise.}\end{array}\right.
\end{equation}

Let
$$C=\bigoplus_{n\geq 0} C_n, \qquad \text{where}\qquad C_n=\{\text{class functions of $\U(n,\FF_{q^2})$}\},$$
be the ring of class functions with multiplication given by Deligne-Lusztig induction
\begin{equation} \label{dlprod}
\chi \circ \eta = R_{U_m \oplus U_n}^{U_{m+n}} (\chi \otimes \xi).
\end{equation}
where  $\chi \in C_m$ and $\xi \in C_n$.  See \cite{Ca85}, for example, for a discussion of Deligne-Lusztig induction.  This ring structure is studied in \cite{dignemichel}, and it turns out to be exactly the same structure induced by a product on class functions of $U_n$ using Hall polynomials, which was studied by Ennola \cite{ennola}.  In particular, given $\bmu, \bnu, \blam \in \cP^{\tilde\Phi_2}$, we define the Hall polynomial $g_{\bmu \bnu}^{\blam}(t)$ by
\begin{equation} \label{multihall}
g_{\bmu\bnu}^{\blam}(t) = \prod_{s \in \tilde\Phi_2} g_{\bmu^{(s)} \bnu^{(s)}}^{\blam^{(s)}} (t^{|s|}).
\end{equation}
The following result is proven in \cite{TV05}, but also seems to be implicit in \cite{dignemichel}.

\begin{theorem} \label{halldl}
Let $\chi\in C_m$, $\eta\in C_n$, and let $\blam \in \cP^{\tilde\Phi_2}_{m+n}$.  Then the value of $\chi \circ \xi$ on the conjugacy class $\blam$ of $\U(m+n,\FF_{q^2})$ is given by
$$ (\chi \circ \xi)(\blam) = \sum_{\bmu \in \cP^{\tilde\Phi_2}_m, \bnu \in \cP^{\tilde\Phi_2}_n} g_{\bmu \bnu}^{\blam}(-q) \chi(\bmu)\xi(\bnu).$$
\end{theorem}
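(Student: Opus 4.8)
The plan is to reduce the statement to an identification of the structure constants of the Deligne--Lusztig product $\circ$, in the basis of class-indicator functions, with the unitary Hall polynomials $g_{\bmu\bnu}^{\blam}(-q)$, and then to obtain that identification by transporting the classical $\GL$ statement through Ennola's substitution $q\mapsto -q$.

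First I would reduce to characteristic functions of conjugacy classes. Both sides of the asserted formula are bilinear in the pair $(\chi,\eta)$, and the characteristic functions $\kappa_{\bmu}$ of the conjugacy classes indexed by $\bmu\in\cP^{\tilde\Phi_2}_m$ form a $\ZZ$-basis of $C_m$ (similarly for $C_n$). Taking $\chi=\kappa_{\bmu}$ and $\eta=\kappa_{\bnu}$, so that $\chi(\bmu')=\delta_{\bmu\bmu'}$ and $\eta(\bnu')=\delta_{\bnu\bnu'}$, the theorem becomes equivalent to the single family of identities
$$ (\kappa_{\bmu}\circ\kappa_{\bnu})(\blam)=g_{\bmu\bnu}^{\blam}(-q)\qquad\text{for all }\bmu\in\cP^{\tilde\Phi_2}_m,\ \bnu\in\cP^{\tilde\Phi_2}_n,\ \blam\in\cP^{\tilde\Phi_2}_{m+n}. $$
In other words, one must show that the structure constants of $(C,\circ)$ in the class-indicator basis are exactly the polynomials of \eqref{multihall} specialized at $t=-q$.

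Next I would invoke the $\GL$ model together with Ennola duality. For $\GL_n(\FF_q)$ the analogue of the displayed identity is classical \cite[IV.2--4]{Mac} (Green, Hall): Harish--Chandra induction makes the direct sum of the spaces of class functions of the $\GL_n(\FF_q)$ into a commutative ring, and in the basis of characteristic functions of conjugacy classes its structure constants are the Hall polynomials $g_{\bmu\bnu}^{\blam}(q)$ (with $\Phi$ taken relative to $\FF_q$). Since each $g_{\bmu\bnu}^{\blam}$ is a polynomial in the residue-field size, this is a generic statement: it holds after replacing $q$ by an indeterminate $t$, and the specialization $t\mapsto -q$ yields precisely Ennola's Hall-polynomial product on class functions of the unitary groups \cite{ennola}. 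It therefore suffices to identify $(C,\circ)$ with Ennola's ring, which is the content of \cite{dignemichel}: the product on $C$ defined by Deligne--Lusztig induction in \eqref{dlprod} coincides with Ennola's product built from Hall polynomials at $-q$. Concretely, $\U_m(\FF_{q^2})\times\U_n(\FF_{q^2})$ is the group of rational points of an $F$-stable Levi subgroup of $\GL_{m+n}(\bar\FF_q)$ (with $F$ the Frobenius defining the unitary group), so $R^{U_{m+n}}_{U_m\oplus U_n}$ is the $q\mapsto -q$ counterpart of the $\GL$ parabolic induction above; the semisimple part of the ensuing computation reduces to the combinatorics of how $\tilde\Phi_2$-orbits split and merge when passing between the Levi and the ambient group, which is exactly the product over $s\in\tilde\Phi_2$ in \eqref{multihall}, while the unipotent part is controlled by the Green functions of $\U_n(\FF_{q^2})$. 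Assembling these facts gives $(\kappa_{\bmu}\circ\kappa_{\bnu})(\blam)=g_{\bmu\bnu}^{\blam}(-q)$, hence the theorem.

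I expect the main obstacle to be the unipotent part of this computation if one does not simply quote \cite{dignemichel}. Carrying it out directly for the unitary groups requires the Mackey/character formula for $R^{U_{m+n}}_{U_m\oplus U_n}$, the fact that the Green functions of $\U_n(\FF_{q^2})$ are obtained from those of $\GL_n(\FF_q)$ by $q\mapsto -q$, and careful control of the sign $(-1)^{\ell}$ that accompanies Lusztig induction in the twisted case. The semisimple bookkeeping, by contrast, is routine: splitting an $F$-stable maximal torus (equivalently, a semisimple class) of $\U_{m+n}$ into its contributions along the two factors forces one to distinguish the cases $s\in\Phi_2$ (where $|s|$ is odd and $(-q)^{|s|}=-q^{|s|}$ gives a genuine Ennola twist) from $s=v\cup\tilde v$ with $v\neq\tilde v$ (where $|s|$ is even and $(-q)^{|s|}=q^{|s|}$, so there is no twist), and these two cases line up precisely with the factorization of $g_{\bmu\bnu}^{\blam}$ in \eqref{multihall} and with the two types of local centralizer factors $A(s,\bnu^{(s)},i)$ appearing earlier in the paper.
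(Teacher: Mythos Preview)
Your proposal is correct and aligns with what the paper does: the paper does not give its own proof of this theorem at all, but simply cites it as proven in \cite{TV05} and implicit in \cite{dignemichel}, after having already noted that the Deligne--Lusztig ring structure on $C$ studied in \cite{dignemichel} coincides with Ennola's Hall-polynomial product \cite{ennola}. Your sketch---reduce by bilinearity to the class-indicator basis, then identify the structure constants with $g_{\bmu\bnu}^{\blam}(-q)$ via the Digne--Michel/Ennola comparison---is exactly the route those references take, and you correctly flag the unipotent/Green-function computation (and the accompanying Ennola sign) as the substantive step if one unwinds the citation rather than simply quoting it.
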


In the case that $q$ is odd, Henderson \cite{hender} has decomposed the permutation character ${\rm Ind}_{Sp_{2n}}^{U_{2n}}({\bf 1})$ into irreducible constituents, and proved that it is multiplicity free.  It is well known that the Gelfand-Graev character is multiplicity free (see \cite{St67}, for example), and the decomposition into irreducibles for the finite unitary group is given in terms of Deligne-Lusztig characters in \cite{dellusz} and in terms of irreducibles in \cite{ohm}.  Using these results, along with the characteristic map of the finite unitary group, the following result is proven in \cite{TV05}.

\begin{theorem} \label{model}
Let $q$ be odd, and let $U_n = {\rm U}(n, \FF_{q^2})$, $Sp_{2k}=\Sp(2k,\FF_q)$ and let $\mathbf{1}$ be the trivial character.  The sum of all distinct irreducible complex characters of $U_n$ is given by
$$\sum_{\chi \in {\rm Irr}(U_n)} \chi = \sum_{k + 2l = n} \Gamma_{(k)} \circ {\rm Ind}_{Sp_{2l}}^{U_{2l}}({\bf 1}).$$
\end{theorem}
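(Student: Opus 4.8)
The strategy is to transport the asserted equality of class functions through the characteristic map of $\U(n,\FF_{q^2})$, converting it into an identity of symmetric functions that can be verified using the Hall--Littlewood identities collected in Section \ref{symmetric}. Recall that the characteristic map $\ch$ is a ring isomorphism from $C$, equipped with the Deligne--Lusztig product $\circ$, onto a ring assembled from one copy of the ring of symmetric functions (in its own set of variables, with a parameter eventually specialized to $-q$) for each orbit $f\in\tPhi_2$; it carries $\circ$ to ordinary multiplication and sums to sums. This is the formalism behind Theorem \ref{halldl}, and goes back to \cite{ennola, dignemichel}. It therefore suffices to identify the images under $\ch$ of the three ingredients and to check
$$\ch\Bigl(\sum_{\chi\in\Irr(U_n)}\chi\Bigr)=\sum_{k+2l=n}\ch\bigl(\Gamma_{(k)}\bigr)\,\ch\bigl(\Ind_{Sp_{2l}}^{U_{2l}}(\mathbf{1})\bigr),$$
after which the theorem follows by extracting the degree-$n$ component.

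For the left-hand side: $\Irr(\U(n,\FF_{q^2}))$ is parametrized, via the Jordan decomposition of characters, by $\tPhi_2$-partitions in the same fashion as the conjugacy classes, so $\ch$ sends $\sum_{\chi}\chi$ to the product over $f\in\tPhi_2$ of the generating series $\sum_{\lambda}s_{\lambda}$ in the variables attached to $f$ --- up to the sign and degree twists that distinguish the unitary characteristic map from the linear one. By the classical Littlewood identities (\cite[I.5]{Mac}), $\sum_\lambda s_\lambda$ together with its even- and conjugate-even analogues are explicit infinite products, so the left-hand side is an explicit product indexed by $\tPhi_2$.

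For the right-hand side: first, $\ch(\Gamma_{(k)})$ is known explicitly --- by the character formula (\ref{UGGFormula}) it is concentrated on the unipotent block $f=1$, and there it has a closed form in terms of Hall--Littlewood functions at $-q$, which one can also read off from the decomposition of $\Gamma_{(k)}$ into regular characters given via Deligne--Lusztig characters in \cite{dellusz} and via irreducibles in \cite{ohm}. Second, using Henderson's multiplicity-free decomposition \cite{hender} of $\Ind_{Sp_{2l}}^{U_{2l}}(\mathbf{1})$ --- which is known only for $q$ odd, and this is the sole place the hypothesis enters --- together with Lemma \ref{UpDownConjugacy}(c)--(d) to pin down which $\tPhi_2$-partitions label its constituents, $\ch\bigl(\Ind_{Sp_{2l}}^{U_{2l}}(\mathbf{1})\bigr)$ becomes a sum of Hall--Littlewood functions with parity restrictions on the multiplicities: even multiplicity of the odd parts on the blocks $f=\pm1$, and no restriction on the other blocks (where the non-$\pm1$ elementary divisors pair up as $s,s^*$). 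Multiplying, summing over $k+2l=n$, and collecting block by block over $\tPhi_2$, the right-hand side becomes, on the blocks $f=\pm1$, precisely the left-hand side of Kawanaka's identity (Theorem \ref{HLKa}) with $t=-q$ --- the convolution with $\Gamma_{(k)}$ supplying the weight $t^{(o(\lambda)-|\lambda|)/2}c_\lambda(t)$ --- and, on each remaining block, just $\sum_\lambda s_\lambda$ in the appropriate variables.

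It then remains to match the two sides block by block: on $f=\pm1$ this is exactly Theorem \ref{HLKa} specialized at $t=-q$, and on each $f\ne\pm1$ it is an instance of a Littlewood identity after the reindexing relating the variable set of $f$ to that of the $\Phi^*$-orbit $f\cup\tilde f$. The step demanding real care, rather than routine computation, is the analysis of the right-hand side: extracting $\ch\bigl(\Ind_{Sp_{2l}}^{U_{2l}}(\mathbf{1})\bigr)$ correctly from Henderson's decomposition, and then keeping track of the several sign conventions intrinsic to the unitary characteristic map so that the Hall--Littlewood parameter genuinely specializes to $-q$ and the parity conditions on the index sets line up with those of Theorem \ref{HLKa} (and with the evenness conditions in Lemma \ref{UpDownConjugacy} and in the definition of $\cP^{\pm\Phi}$). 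Once these generating functions are in hand, equating degree-$n$ parts yields the theorem.
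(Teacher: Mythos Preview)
The paper does not actually prove this theorem in the text; it is quoted from the authors' earlier paper \cite{TV05}, with only the ingredients named: Henderson's decomposition of $\Ind_{Sp_{2l}}^{U_{2l}}(\mathbf{1})$ into irreducibles, the known decomposition of $\Gamma_{(k)}$ into irreducibles, and the characteristic map. Your high-level plan---transport through $\ch$ and compare---matches this description, and correctly isolates the one place where ``$q$ odd'' is needed.

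Where your outline goes astray is in the specific mechanism on the $f=\pm1$ blocks. Henderson's result is a decomposition into \emph{irreducible characters}, and under the characteristic map irreducibles correspond to (twisted) \emph{Schur} functions $s_\lambda$, not Hall--Littlewood functions $P_\lambda$. Thus $\ch\bigl(\Ind_{Sp_{2l}}^{U_{2l}}(\mathbf{1})\bigr)$ is a sum of Schur functions indexed by the multipartitions Henderson singles out, and likewise $\ch(\Gamma_{(k)})$ is a sum of Schur functions indexed by the regular characters. The identity to be checked is therefore a Schur-function identity, and the relevant tools are the classical Littlewood identities (the ones expressing $\sum_\lambda s_\lambda$, $\sum_{\lambda'\text{ even}} s_\lambda$, etc., as infinite products)---on \emph{every} block, including $f=\pm1$. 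Your invocation of Kawanaka's identity (Theorem~\ref{HLKa}), which lives in the Hall--Littlewood world, is out of place here: in this paper that identity is used in the proof of Theorem~\ref{charvalues}, where one works in the $P_\lambda$ basis because one is computing values on conjugacy classes, and Theorem~\ref{model} is an \emph{input} to that computation, not a consequence of it. In particular, the assertion that ``the convolution with $\Gamma_{(k)}$ supplies the weight $t^{(o(\lambda)-|\lambda|)/2}c_\lambda(t)$'' has no clear meaning in the Schur basis, and note also that $\Gamma_{(k)}$ is supported only on the $f=1$ block, so it cannot contribute anything on $f=-1$.

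In short: right framework, right ingredients, but the wrong symmetric-function identity plugged in at the crucial step. Replace the Kawanaka/Hall--Littlewood step on $f=\pm1$ by the appropriate Littlewood identities for Schur functions (matching Henderson's constituents against the complementary constituents of $\Gamma$), and the argument goes through along the lines of \cite{TV05}.
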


Giving the sum of all distinct irreducible characters of a group as a sum of one-dimensional characters induced from subgroups is called a \emph{model} for the group.  In Theorem \ref{model}, Deligne-Lusztig induction is used in place of subgroup induction, so this is a slight variant from the classical situation.  Note that since Deligne-Lusztig induction does not, in general, produce characters, it is somewhat surprising that all of these products do in fact result in characters.  A model for the group ${\rm GL}(n, \FF_q)$ was first found by Klyachko \cite{kl}, and made more explicit by Inglis and Saxl \cite{inglissaxl}, and the result for that case is analogous to Theorem \ref{model}, except with Deligne-Lusztig induction replaced by parabolic induction.

\subsection{Values of the permutation character} \label{permchar}

Let $U_n=\U(n,\FF_{q^2})$ and $Sp_{2n}=\Sp(2n,\FF_q)$.  
The goal of this section is to use results from Section \ref{conjclasses} to give a closed formula for the value of 
$$\mathbf{1}_{Sp_{2n}}^{U_{2n}}={\rm Ind}_{Sp_{2n}}^{U_{2n}}({\bf 1})$$
evaluated at an arbitrary conjugacy class of $U_{2n}$.  Throughout this section, we let $q$ be the power of an odd prime.

Let $g \in U_{2n}$.  From the formula for an induced character, we have
\begin{equation} \label{induced}
{\bf 1}_{Sp_{2n}}^{U_{2n}}(g) = \frac{ \big|\{ u \in U_{2n} \, \mid \, ugu^{-1} \in Sp_{2n} \} \big|}{|Sp_{2n}|} = \frac{|C_{U_{2n}}(g)|}{|Sp_{2n}|} \big| \{ h \in Sp_{2n} \, \mid \, h \sim g \text{ in } U_{2n} \} \big|.
\end{equation}
If $g$ is in the conjugacy class indexed by $\bnu \in \cP^{\tilde\Phi_2}_{2n}$, then the value of $|C_{U_{2n}}(g)|=a_{\bnu}$, as given by Theorem \ref{wall}.  So, we need to know the number of elements $h$ in $Sp_{2n}$ which are conjugate to $g$ in $U_{2n}$.

Let $s \in\{\pm 1\}$, and let $\bnu \in \cP^{\tilde\Phi_2}$.  Writing $m_i = m_i(\bnu^{(s)})$, define $A(s, \bnu^{(s)}, i)$ and $B(s, \bnu^{(s)})$ as follows.
\begin{align} 
A(s, \bnu^{(s)}, i) &= \prod_{j=1}^{\lfloor m_i/2 \rfloor} (1 - 1/q^{2j})=q^{-2(\lfloor m_i/2\rfloor)^2 - \lfloor m_i/2 \rfloor} |\Sp(2\lfloor m_i/2\rfloor,\FF_q)|,\notag\\
B(s, \bnu^{(s)}) &= q^{|\bnu^{(s)}|/2 + n(\bnu^{(s)}) + o(\bnu^{(s)})/2} \prod_i A(s, \bnu^{(s)}, i). \label{ABuni}
\end{align}

\begin{theorem} \label{permcharvals}
Let $\bnu \in \cP^{\tilde\Phi_2}_{2n}$, and let $q$ be odd.  Then ${\bf 1}_{Sp_{2n}}^{U_{2n}}(\bnu) = 0$ unless 

\noindent $\mathrm{(i)}$ For every $s \in \tilde\Phi_2$, $\bnu^{(s)} = \bnu^{(s^*)}$.

\noindent $\mathrm{(ii)}$ For every odd $j\geq 1$, $m_j(\bnu^{(1)}),m_j(\bnu^{(-1)})\in 2\ZZ_{\geq 0}$.

\noindent
If both $\mathrm{(i)}$ and $\mathrm{(ii)}$ are satisfied, then
$${\bf 1}_{Sp_{2n}}^{U_{2n}}(\bnu) = \frac{a_{\bnu}}{\prod_{s \in \tilde\Phi_2} B(s, \bnu^{(s)})},$$
where $a_{\bnu}$ is as defined in Theorem \ref{wall}, and $B(s, \bnu^{(s)})$ is defined as in (\ref{ABdefn}) and (\ref{ABuni}).
\end{theorem}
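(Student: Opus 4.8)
The plan is to unwind the induced-character formula (\ref{induced}) into a count of $Sp_{2n}$-conjugacy classes that fuse into the given $U_{2n}$-class, and then evaluate that count with the centralizer formulas of Section \ref{conjclasses}. Let $g$ represent the class $\bnu$ and put $N=|\{h\in Sp_{2n}\mid h\sim g\text{ in }U_{2n}\}|$. Decomposing $N$ into a disjoint union of $Sp_{2n}$-classes and using $|C_{U_{2n}}(g)|=a_{\bnu}$ (Theorem \ref{wall}), equation (\ref{induced}) becomes
$$\mathbf 1_{Sp_{2n}}^{U_{2n}}(\bnu)=\frac{|C_{U_{2n}}(g)|}{|Sp_{2n}|}\,N=a_{\bnu}\sum_{\mathcal O}\frac{1}{|C_{Sp_{2n}}(\mathcal O)|},$$
where $\mathcal O$ runs over the $Sp_{2n}$-classes contained in the $U_{2n}$-class $\bnu$. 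If (i) or (ii) fails, then by Lemma \ref{UpDownConjugacy}(d) there is no $h\in Sp_{2n}$ conjugate to $g$ in $U_{2n}$, so $N=0$ and the value vanishes; this settles the first assertion.

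Now assume (i) and (ii). Using the parameterization of $Sp_{2n}$-classes by $\cP^{\pm\Phi}_{2n}$ together with the compatibility of these parameters with the $\GL(2n,\FF_q)$- (hence $U_{2n}$-) Jordan form recorded at the end of Section \ref{conjclasses} and in Lemma \ref{UpDownConjugacy}, the classes $\mathcal O$ fusing into $\bnu$ are precisely the $\bgamma\in\cP^{\pm\Phi}_{2n}$ whose partition at $r\neq\pm1$ is the one forced by $\bnu$, and with $(\bgamma^{(\pm1)})^{\circ}=\bnu^{(\pm1)}$ but $\bgamma^{(\pm1)}\in\cP^{\pm}$ carrying an arbitrary sign on each even part size of positive multiplicity. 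By Theorem \ref{spucent},
$$|C_{Sp_{2n}}(\bgamma)|=B^*(1,\bgamma^{(1)})\,B^*(-1,\bgamma^{(-1)})\prod_{s\in\tilde\Phi_2\setminus\{\pm1\}}B(s,\bnu^{(s)}),$$
and only the first two factors depend on $\bgamma$. Pulling the $s\neq\pm1$ product out of the sum over $\mathcal O$ and noting that the remaining sum splits as a product over $s=1$ and $s=-1$ (the two symplectic signed partitions being chosen independently), the theorem reduces to the single identity
$$\sum_{\delta}\frac{1}{B^*(s,(\mu,\delta))}=\frac{1}{B(s,\mu)}\qquad(s\in\{\pm1\}),$$
where $\mu=\bnu^{(s)}$ has even multiplicity in every odd part by (ii), the sum is over all sign functions $\delta$ on the even part sizes of $\mu$ of positive multiplicity, and $B(s,\mu)$ is as in (\ref{ABuni}).

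To prove this identity I would factor out of $B^*(s,(\mu,\delta))$ the part independent of $\delta$ — the power of $q$ in (\ref{Btildedefn}) and the factors $A^*(s,\bgamma^{(s)},i)=|\Sp(m_i,\FF_q)|$ for $i$ odd, which see only $m_i(\mu)$ — so that the sum over $\delta$ factors over the even part sizes $i$ with $m_i(\mu)>0$ into factors $\sum_{\epsilon=\pm}\bigl(q^{|m_i|/2}|\mathrm{O}^{\epsilon}(|m_i|,\FF_q)|\bigr)^{-1}$ (both summands coinciding, hence a factor $2$ over the single orthogonal group, when $m_i$ is odd). The crux is the elementary identity
$$\sum_{\epsilon=\pm}\frac{1}{q^{c}\,|\mathrm{O}^{\epsilon}(2c,\FF_q)|}=\frac{q^{c}}{|\Sp(2c,\FF_q)|},$$
together with the analogous bookkeeping when $m_i$ is odd; comparing the outcome with $A(s,\mu,i)=\prod_{j=1}^{\lfloor m_i/2\rfloor}(1-q^{-2j})$ and the power of $q$ in (\ref{ABuni}) then finishes the proof, the one arithmetic check being that the surviving powers of $q$ agree — this comes down to the relation $o(\mu)=\sum_{i\text{ odd}}m_i(\mu)$, and the transient half-integer powers of $q$ that appear when some $m_i$ is odd cancel in the comparison.

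I expect this last step to be the main obstacle: matching the powers of $q$ across the three regimes ($i$ odd; $i$ even with $m_i(\mu)$ even; $i$ even with $m_i(\mu)$ odd), keeping track of the half-integer exponents, and handling the ``unique orthogonal group'' convention for odd $m_i$. The structural steps — reducing to a sum over fusing classes and identifying those classes via the $\cP^{\pm\Phi}_{2n}$ parameterization — are routine given Lemma \ref{UpDownConjugacy}, Theorem \ref{wall}, and Theorem \ref{spucent}.
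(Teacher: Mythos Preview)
Your proposal is correct and follows essentially the same route as the paper: reduce via (\ref{induced}) and Lemma \ref{UpDownConjugacy}(d) to a sum over the $Sp_{2n}$-classes fusing into $\bnu$, apply Theorem \ref{spucent} to pull out the $s\neq\pm1$ factors, and then prove $\sum_{\delta}B^*(s,(\mu,\delta))^{-1}=B(s,\mu)^{-1}$ by factoring off the odd-$i$ symplectic contributions and summing over the sign choices on even part sizes. Your key identity $\sum_{\epsilon}\bigl(q^{c}|\mathrm{O}^{\epsilon}(2c,\FF_q)|\bigr)^{-1}=q^{c}/|\Sp(2c,\FF_q)|$ is exactly what the paper's computation (\ref{reduce3})--(\ref{reduce5}) encodes, though the paper writes it instead as the elementary identity $\sum_{\delta\in\{\pm1\}^k}\prod_j(1+\delta_j\alpha_j)^{-1}=2^k/\prod_j(1-\alpha_j^2)$ applied with $\alpha_j=q^{-m_i/2}$.
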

\begin{proof}  Lemma \ref{UpDownConjugacy} (d) and (\ref{induced}) imply that ${\bf 1}_{Sp_{2n}}^{U_{2n}}(\bnu)=0$ if $\bnu$ does not satisfy (i) and (ii) (else the conjugacy class parameterized by $\bnu$ is not conjugate to an element in $Sp_{2n}$).  

Let $h \in Sp_{2n}$, and suppose $h$ is in the $U_{2n}$-conjugacy class parameterized by $\bnu \in \cP^{\Phi}_{2n}$.  Thus, by Lemma \ref{UpDownConjugacy} (d), if $h$ is in the $Sp_{2n}$-conjugacy class parameterized by $\bmu\in \cP^{\pm\Phi^*}_{2n}$, then 
$$\bmu^{(1)^\circ}=\bnu^{(1)},\quad \bmu^{(-1)^\circ}=\bnu^{(-1)}, \quad \text{and}\quad \bmu^{(s\cup s^*)}=\bnu^{(s)}, \quad\text{for all $s\cup s^*\in \Phi^*\setminus\{\pm 1\}$.}$$
From (\ref{induced}) and Theorem \ref{spucent}, we have
\begin{align*} 
{\bf 1}_{Sp_{2n}}^{U_{2n}}(\bnu) &= \frac{a_{\bnu}}{|Sp_{2n}|}\sum_{ \mu,\gamma \in \cP^{\pm} \atop \mu^{\circ} = \bnu^{(1)},\gamma^\circ=\bnu^{(-1)}} \frac{|Sp_{2n}|}{B^*(1, \mu) B^*(-1, \gamma) \prod_{ s\in \tilde\Phi_2\setminus\{\pm 1\}} B(s, \bnu^{(s)})}\\
&= \frac{a_{\bnu}}{\prod_{ s \in \tilde\Phi_2\setminus\{\pm 1\}} B(s, \bnu^{(s)})}  \sum_{ \mu,\gamma \in \cP^{\pm} \atop \mu^{\circ} = \bnu^{(1)},\gamma^\circ=\bnu^{(-1)}}  \frac{1}{B^*(1, \mu) B^*(-1, \gamma)}.
\end{align*}
So we want to show
\begin{equation} \label{reduce1}
\frac{1}{B(1, \bnu^{(1)}) B(-1, \bnu^{(-1)})} =  \sum_{ \mu,\gamma \in \cP^{\pm} \atop \mu^{\circ} = \bnu^{(1)},\gamma^\circ=\bnu^{(-1)}}  \frac{1}{B^*(1, \mu) B^*(-1, \gamma)}.
\end{equation}
Note that we can think of the set of symplectic signed partitions $\cP^\pm$ as
$$\cP^\pm=\{(\nu,\delta)\in \cP\times \cS_\nu\},\quad \text{where}\quad \cS_\nu=\left\{\begin{array}{ccc}\delta:\{i\in 2\ZZ_{\geq 1}\ \mid\ \nu_j = i \text{ for some } j \} & \longrightarrow &\{\pm 1\}\\ i & \mapsto & \delta_i\end{array}\right\},$$
so that if $\mu=\bnu^{(1)}$ and $\gamma=\bnu^{(-1)}$, then equation (\ref{reduce1}) becomes
\begin{equation*} 
\frac{1}{B(1, \mu) B(-1, \gamma)} = \sum_{\delta\in \cS_\mu\atop \tau\in \cS_\gamma} \frac{1}{B^*(1, (\mu,\delta)) B^*(-1, (\gamma,\tau))}.
\end{equation*}

From (\ref{Atildedefn}) and (\ref{Btildedefn}), we have for $s\in \{\pm 1\}$ and $(\mu,\delta)\in \cP^\pm$,
\begin{align*}
B^*(s, (\mu,\delta)) &=  q^{|\mu|/2 + n(\mu) - \sum_i m_i^2/2 } \prod_i A^*(s, (\mu,\delta), i)\\ 
&= q^{ |\mu|/2 + n(\mu) - \sum_i m_i^2/2 } \hspace{-.1cm} \prod_{i \text{ odd}} \big|{\rm Sp}(m_i, \FF_q)\big| \prod_{i \text{ even}} q^{m_i/2} \big|{\rm O}^{\delta_i}(m_i, \FF_q)\big|,
\end{align*}
where $m_i = m_i(\mu)$. For $i$ odd,
$$\big|{\rm Sp}(m_i, \FF_q)\big| = q^{m_i^2/4} \prod_{j = 1}^{m_i/2} (q^{2j} - 1) = q^{m_i/2 + m_i^2/2} \prod_{j=1}^{m_i/2} (1 - 1/q^{2j}).$$
Note that
$$ \prod_{i \text{ odd}} q^{m_i/2} = q^{o(\mu)/2}. $$
From (\ref{reduce1}) and (\ref{ABuni}), it suffices to show
\begin{equation} \label{reduce2}
\frac{q^{-\sum_{i \text{ even}} m_i(\mu)^2/2 -\sum_{i \text{ even}} m_i(\gamma)^2/2}}{\prod_{i \text{ even}} A(1, \mu, i) A(-1, \gamma, i)} =  \sum_{\delta\in \cS_\mu\atop \tau\in \cS_\gamma}  \prod_{i \text{ even}}\frac{1}{A^*(1, (\mu,\delta), i) A^*(-1, (\gamma,\tau), i)}. 
\end{equation}
Let $s\in \{\pm 1\}$ and $(\mu,\delta)\in \cP^\pm$.  For $i$ even,  and $m_i=m_i(\mu)$ odd,
$$A^*(s, (\mu,\delta), i)= q^{m_i/2} \big|{\rm O}(m_i, \FF_q) \big| = 2q^{m_i^2/4 + 1/4}\hspace{-.12cm}\prod_{j=1}^{\lfloor m_i/2 \rfloor}\hspace{-.1cm} (q^{2j} - 1) = 2q^{m_i^2/2}\hspace{-.12cm}\prod_{j=1}^{\lfloor m_i/2 \rfloor}\hspace{-.1cm} (1 - 1/q^{2j}).$$
For $i$ even and $m_i$ even,
\begin{align*}
A^*(s,(\mu,\delta), i) &= q^{m_i/2} \big|{\rm O}^{\delta_i}(m_i, \FF_q) \big|\\
&= 2q^{(m_i)^2/4}(q^{m_i/2} + \delta_i) \prod_{j=1}^{m_i/2 - 1} (q^{2j} - 1)\\
&= 2q^{(m_i)^2/2}(1 +\delta_i q^{-m_i/2})\prod_{j=1}^{m_i/2 - 1} (1 - 1/q^{2j}).
\end{align*}

Returning to (\ref{reduce2}), let $M= \big|\{ i \text{ even} \; | \; m_i(\mu) \neq 0 \} \big|+ \big|\{ i \text{ even} \; | \; m_i(\gamma) \neq 0 \} \big|$.  Then
\begin{align} \label{reduce3}
 \sum_{\delta\in \cS_\mu\atop \tau\in \cS_\gamma} &  \prod_{i \text{ even}}\frac{1}{A^*(1, (\mu,\delta), i) A^*(-1, (\gamma,\tau), i)} \notag \\
& = \frac{q^{ -\sum_{i \text{ even}} m_i(\mu)^2/2-\sum_{i \text{ even}} m_i(\gamma)^2/2}2^{-M}}{\prod_{i \text{ even}} A(1, \mu, i) A(-1, \gamma, i)}  \prod_{i\text{ even}\atop m_i(\mu) \text{ even}} (1 - 1/q^{|m_i(\mu)|})\prod_{i\text{ even}\atop m_i(\mu) \text{ even}}(1 - 1/q^{|m_i(\gamma)|}) \notag \\ 
& \hspace*{2cm} \cdot \sum_{\delta\in \cS_\mu\atop \tau\in \cS_\gamma} \prod_{i\text{ even}\atop m_i(\mu) \text{ even}} \frac{1}{(1 + \delta_i q^{-|m_i(\mu)|/2})} \prod_{i\text{ even}\atop m_i(\mu) \text{ even}} \frac{1}{(1 + \tau_i q^{-|m_i(\gamma)|/2})}.
\end{align}
In the last sum there is a choice of signs for $\delta_i$ and $\tau_i$ whenever $i$ is even and the multiplicities are nonzero, and so there are $2^{M}$ terms in the sum.  Define 
\begin{align*}
 M_e &= \big| \{i \text{ even} \; | \; m_i(\mu) \neq 0 \text{ and } m_i(\mu) \text{ is even} \}\big|+\big| \{i \text{ even} \; | \; m_i(\gamma) \neq 0 \text{ and } m_i(\gamma) \text{ is even} \}\big|\\
M_o &= \big| \{ i \text{ even} \; | \; m_i(\mu) \text{ is odd}\}\big|+ \big| \{ i \text{ even} \; | \; m_i(\mu) \text{ is odd}\}\big|, 
 \end{align*}
so that $M = M_e + M_o$.  Since the choices of sign for $m_i$ odd do not affect the sum in (\ref{reduce3}), we have
\begin{align} \label{reduce4}
 \sum_{\delta\in \cS_\mu\atop \tau\in \cS_\gamma} \prod_{i\text{ even}\atop m_i(\mu)\text{ even}}& \frac{1}{(1 + \delta_i q^{-m_i(\mu)/2})} \prod_{i\text{ even}\atop m_i(\gamma) \text{ even}} \frac{1}{(1 + \tau_i q^{-m_i(\gamma)/2})} \notag \\
&= \sum_{\delta_i,\tau_j\in \{\pm 1\} \atop{ i, m_i(\mu),j,m_j(\gamma) \text{ even}}} \hspace{-.5cm} 2^{M_o}\hspace{-.25cm} \prod_{i\text{ even}\atop m_i(\mu) \text{ even}} \frac{1}{1 + \delta_i/q^{m_i(\mu)/2}} \prod_{j\text{ even}\atop m_j(\gamma)\text{ even}}\frac{1}{1 + \tau_j/q^{m_j(\gamma)/2}}, 
\end{align}
where there are exactly $2^{M_e}$ terms in the second sum.

For any set of numbers $\{\alpha_1, \ldots, \alpha_k \}$, an induction argument gives us the identity
$$ \sum_{\delta_1,\delta_2,\ldots, \delta_k \in\{\pm 1\}} \prod_{j = 1}^k \frac{1}{1 + \delta_j \alpha_j} = \frac{2^k}{\prod_{j=1}^k (1 - \alpha_j^2)}.$$
Applying this identity to (\ref{reduce4}), we obtain
\begin{align} \label{reduce5}
 \sum_{\delta_i,\tau_j\in \{\pm 1\} \atop{ i, m_i(\mu),j,m_j(\gamma) \text{ even}}} \hspace{-.5cm} 2^{M_o}\hspace{-.25cm} \prod_{i\text{ even}\atop m_i(\mu) \text{ even}} & \frac{1}{1 + \delta_i/q^{m_i(\mu)/2}} \prod_{j\text{ even}\atop m_j(\gamma)\text{ even}}\frac{1}{1 + \tau_j/q^{m_j(\gamma)/2}}\notag\\
&= \frac{2^{M}}{\prod_{i, m_i(\mu) \text{ even}} (1 - 1/q^{|m_i(\mu)|})\prod_{i, m_i(\gamma) \text{ even}}(1 - 1/q^{|m_i(\gamma)|}) }.
\end{align}
Substituting (\ref{reduce5}) into (\ref{reduce3}), we obtain the desired result (\ref{reduce2}).
 \end{proof}

Theorem \ref{permcharvals} has an especially nice form when evaluated at unipotent elements.  

\begin{corollary} \label{spunipotent}
Let $u_{\mu} \in U_n$ be a unipotent element of type $\mu$.  Unless $m_i(\mu) \in 2\ZZ_{\geq 0}$ whenever $i$ is odd, ${\bf 1}_{Sp_{2n}}^{U_{2n}}(u_{\mu}) = 0$, and otherwise we have
$${\bf 1}_{Sp_{2n}}^{U_{2n}}(u_{\mu}) = \frac{(-1)^{|\mu|} a_{\mu}(-q)}{B(1, \mu)} = q^{n(\mu) + (|\mu| - o(\mu))/2} c_{\mu}(-1/q).$$
\end{corollary}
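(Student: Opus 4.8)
The plan is to apply Theorem \ref{permcharvals} directly to the $\tilde\Phi_2$-partition $\bnu$ that parameterizes $u_\mu$, namely the one with $\bnu^{(1)} = \mu$ and $\bnu^{(s)} = \emptyset$ for every other $s \in \tilde\Phi_2$. Condition $\mathrm{(i)}$ of Theorem \ref{permcharvals} holds automatically, since the only nonempty component sits at $s = 1 = 1^*$, and condition $\mathrm{(ii)}$ collapses to the single requirement $m_j(\mu) \in 2\ZZ_{\geq 0}$ for all odd $j$, because $\bnu^{(-1)} = \emptyset$. This already gives the asserted vanishing of ${\bf 1}_{Sp_{2n}}^{U_{2n}}(u_\mu)$ when this parity condition fails.

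Assuming the condition holds, I would next simplify the closed formula $a_{\bnu}/\prod_{s}B(s,\bnu^{(s)})$ of Theorem \ref{permcharvals}. In the numerator, Theorem \ref{wall} gives $a_{\bnu} = (-1)^{|\mu|}\prod_{s}a_{\bnu^{(s)}}\!\big((-q)^{|s|}\big)$, and since $a_{\emptyset}\equiv 1$ and $|1| = 1$ only the $s = 1$ term survives, so $a_{\bnu} = (-1)^{|\mu|}a_\mu(-q)$. In the denominator one checks from the definitions (\ref{ABdefn}) and (\ref{ABuni}) that $B(s,\emptyset) = 1$ for every $s$ (the exponent of $q$ vanishes and the product of $A$-factors is empty), in both the $s = \pm 1$ and the generic cases; hence $\prod_{s\in\tilde\Phi_2}B(s,\bnu^{(s)}) = B(1,\mu)$. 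This yields the first equality ${\bf 1}_{Sp_{2n}}^{U_{2n}}(u_\mu) = (-1)^{|\mu|}a_\mu(-q)/B(1,\mu)$.

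The second equality is then an unwinding of definitions. Writing $a_\mu(-q) = (-q)^{|\mu|+2n(\mu)}\prod_i\prod_{j=1}^{m_i(\mu)}\big(1-(-1/q)^j\big)$ and $B(1,\mu) = q^{|\mu|/2 + n(\mu) + o(\mu)/2}\prod_i\prod_{j=1}^{\lfloor m_i(\mu)/2\rfloor}(1 - 1/q^{2j})$, the factor $(-1)^{|\mu|}$ cancels the sign in $(-q)^{|\mu|+2n(\mu)}$, and the net power of $q$ becomes $q^{|\mu|+2n(\mu)-|\mu|/2-n(\mu)-o(\mu)/2} = q^{n(\mu)+(|\mu|-o(\mu))/2}$, a genuine power since $|\mu|\equiv o(\mu)\bmod 2$. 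It remains to identify the surviving rational factor with $c_\mu(-1/q)$: for each fixed $i$, split $\prod_{j=1}^{m_i(\mu)}(1-(-1/q)^j)$ into its even-$j$ and odd-$j$ sub-products; the even-$j$ part equals $\prod_{k=1}^{\lfloor m_i(\mu)/2\rfloor}(1 - 1/q^{2k})$, which is precisely $A(1,\mu,i)$ and cancels the corresponding factor of $B(1,\mu)$, leaving $\prod_{j\text{ odd},\,j\le m_i(\mu)}(1-(-1/q)^j)$. Taking the product over $i$ and comparing with formula (\ref{cprod}) for $c_\mu$ at $t = -1/q$ completes the proof.

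I do not expect a genuine obstacle here; once Theorem \ref{permcharvals} is in hand, the corollary is bookkeeping. The only points requiring a little care are verifying $B(s,\emptyset) = 1$ against the two distinct formulas defining $B$ (the $s = \pm 1$ version in (\ref{ABuni}) and the general version in (\ref{ABdefn})), and tracking the signs when pulling $(-1)^{|\mu|}$ through $(-q)^{|\mu|+2n(\mu)}$. The substantive computational step is the cancellation of the even-indexed factors of the numerator against $A(1,\mu,i)$, but this is short and transparent.
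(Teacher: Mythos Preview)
Your proposal is correct and follows essentially the same approach as the paper: apply Theorem \ref{permcharvals} to the $\tilde\Phi_2$-partition supported only at $s=1$, then reduce the second equality to the identity $a_\mu(-q)/B(1,\mu) = (-1)^{|\mu|} q^{n(\mu)+(|\mu|-o(\mu))/2} c_\mu(-1/q)$ via the cancellation of even-$j$ factors against $A(1,\mu,i)$. The only cosmetic difference is that the paper isolates this last identity as a separate Lemma \ref{cfactor}, whereas you carry it out inline.
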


Corollary \ref{spunipotent} immediately follows from Theorem \ref{permcharvals} and the following result.

\begin{lemma} \label{cfactor}
Let $\lambda \in \cP$.  Then
$$\frac{a_{\lambda}(-q)}{B(1, \lambda)} = (-1)^{|\lambda|} q^{n(\lambda) + (|\lambda| - o(\lambda))/2}c_{\lambda}(-1/q).$$
\end{lemma}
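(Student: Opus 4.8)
The plan is to prove the identity by a direct computation, expanding both $a_{\lambda}(-q)$ and $B(1,\lambda)$ from their definitions and simplifying the resulting ratio. First I would substitute $x = -q$ into the formula for $a_{\mu}(x)$ from Theorem \ref{wall}. Since $(1/x)^j = (-1/q)^j$ and $(-q)^{|\lambda| + 2n(\lambda)} = (-1)^{|\lambda|} q^{|\lambda| + 2n(\lambda)}$ (the exponent $2n(\lambda)$ being even), this gives
$$a_{\lambda}(-q) = (-1)^{|\lambda|}\, q^{|\lambda| + 2n(\lambda)} \prod_i \prod_{j=1}^{m_i} \bigl(1 - (-1/q)^j\bigr),$$
where $m_i = m_i(\lambda)$. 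Next I would write out $B(1,\lambda)$ using the unipotent formula (\ref{ABuni}), namely $B(1,\lambda) = q^{|\lambda|/2 + n(\lambda) + o(\lambda)/2} \prod_i \prod_{j=1}^{\lfloor m_i/2 \rfloor}(1 - 1/q^{2j})$.

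Then I would form the ratio $a_{\lambda}(-q)/B(1,\lambda)$. The power of $q$ works out to $|\lambda| + 2n(\lambda) - \bigl(|\lambda|/2 + n(\lambda) + o(\lambda)/2\bigr) = n(\lambda) + (|\lambda| - o(\lambda))/2$, which is exactly the exponent appearing on the right-hand side, and the sign $(-1)^{|\lambda|}$ also matches. It remains to check that the product part equals $c_{\lambda}(-1/q)$. For this I would, for each fixed $i$, split the product $\prod_{j=1}^{m_i}(1 - (-1/q)^j)$ according to the parity of $j$: the even indices $j = 2k$ with $1 \le k \le \lfloor m_i/2 \rfloor$ contribute $\prod_{k=1}^{\lfloor m_i/2\rfloor}(1 - (-1/q)^{2k}) = \prod_{k=1}^{\lfloor m_i/2\rfloor}(1 - 1/q^{2k})$, which cancels the corresponding factor of $B(1,\lambda)$, while the odd indices contribute $\prod_{j \text{ odd},\, j \le m_i}(1 - (-1/q)^j)$. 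Taking the product over all $i$ and comparing with the expression (\ref{cprod}) for $c_{\lambda}(t)$ evaluated at $t = -1/q$ completes the proof.

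There is no serious obstacle here; the computation is elementary. The only points requiring care are the sign bookkeeping — verifying $(-q)^{|\lambda| + 2n(\lambda)} = (-1)^{|\lambda|} q^{|\lambda|+2n(\lambda)}$ and that the factor $1 - (1/x)^j$ becomes $1 - (-1/q)^j$ under $x = -q$ — and making sure one uses the correct definition of $B(1,\lambda)$, namely the unipotent one in (\ref{ABuni}) (since $s = 1$), rather than the definition (\ref{ABdefn}) which applies only to $s \in \tilde\Phi_2 \setminus \{\pm 1\}$. Once Lemma \ref{cfactor} is established, Corollary \ref{spunipotent} follows immediately by plugging $\bnu^{(1)} = \mu$ (and all other $\bnu^{(s)}$ empty) into Theorem \ref{permcharvals}, using $a_{\bnu} = (-1)^{|\mu|} a_{\mu}(-q)$ from Theorem \ref{wall}.
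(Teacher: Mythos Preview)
Your proposal is correct and follows essentially the same approach as the paper's proof: both reduce to the observation that $c_{\lambda}(-1/q)$ equals the full product $\prod_i \prod_{j=1}^{m_i}(1-(-1/q)^j)$ divided by its even-index factors $\prod_i \prod_{j=1}^{\lfloor m_i/2\rfloor}(1-1/q^{2j})$, after which the definitions of $a_\lambda(-q)$ and $B(1,\lambda)$ give the result directly. The paper simply states this ratio form of $c_\lambda(-1/q)$ and invokes the definitions, whereas you spell out the power-of-$q$ and sign bookkeeping explicitly; the underlying argument is the same.
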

\begin{proof} Recall the definition of $c_{\lambda}(t)$ as given in (\ref{cdefn}).  From the form of this function given in (\ref{cprod}), we have
$$ c_{\lambda}(-1/q) = \frac{\prod_i \prod_{j = 1}^{m_i(\lambda)} (1 - (-1/q)^j)}{\prod_i \prod_{j = 1}^{\lfloor m_i(\lambda)/2 \rfloor} (1 - (-1/q)^{2j})}.$$ 
The definitions of $a_{\lambda}(-q)$ in Theorem \ref{wall} and of $B(1, \lambda)$ in (\ref{ABuni}) now imply the result.
\end{proof}

\subsection{Values of the full character sum} \label{fullsum}

We now calculate the sum of all characters of $U_n$ at an arbitrary element.

\begin{theorem} \label{charvalues}
Let $q$ be odd, and let $\blam \in \cP^{\tPhi_2}_n$.  Then $\sum_{\chi \in {\rm Irr}(U_n)} \chi(\blam) = 0$ unless

\noindent $\mathrm{(i)}$ For every $s \in \tPhi_2$, $\blam^{(s)} = \blam^{(s^*)}$.

\noindent $\mathrm{(ii)}$ For every odd $i$, $m_i(\blam^{(-1)}) \in 2\ZZ_{\geq 0}$ .

\noindent $\mathrm{(iii)}$ For every even $i$, $m_i(\blam^{(1)}) \in 2\ZZ_{\geq 0}$.

\noindent
If $\mathrm{(i)}$, $\mathrm{(ii)}$, and $\mathrm{(iii)}$ are satisfied, then
$$\sum_{\chi \in {\rm Irr}(U_n)} \chi(\blam) = \frac{a_{\blam} \, q^{o(\blam^{(1)})}}{\prod_{s \in \tPhi_2} B(s, \blam^{(s)})},$$
where $a_{\blam}$ is as defined in Theorem \ref{wall}, and $B(s, \blam^{(s)})$ is defined as in (\ref{ABdefn}) and (\ref{ABuni}).
\end{theorem}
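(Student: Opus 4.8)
The plan is to feed the model of Theorem~\ref{model} through the Hall-polynomial formula for Deligne--Lusztig induction (Theorem~\ref{halldl}), reduce the statement to a coefficient extraction in a product of Hall--Littlewood generating functions, and identify that product using Kawanaka's identity (Theorem~\ref{HLKa}) and the Fulman--Guralnick identity (Theorem~\ref{HLFG}). Write $\lambda=\blam^{(1)}$. By Theorems~\ref{model} and~\ref{halldl},
$$\sum_{\chi\in\Irr(U_n)}\chi(\blam)=\sum_{k+2l=n}\ \sum_{\bmu\in\cP^{\tPhi_2}_k,\ \bnu\in\cP^{\tPhi_2}_{2l}} g_{\bmu\bnu}^{\blam}(-q)\,\Gamma_{(k)}(\bmu)\,\mathbf{1}_{Sp_{2l}}^{U_{2l}}(\bnu).$$
By~(\ref{UGGFormula}) only unipotent $\bmu$ contribute, say $\bmu^{(1)}=\mu$ with $|\mu|=k$; then in the factorization~(\ref{multihall}) the identity~(\ref{hallempty}) forces $\bnu^{(s)}=\blam^{(s)}$ for every $s\neq 1$, leaving the single factor $g_{\mu,\bnu^{(1)}}^{\lambda}(-q)$. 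Theorem~\ref{permcharvals} then shows $\mathbf{1}_{Sp_{2l}}^{U_{2l}}(\bnu)=0$ unless conditions~(i) and~(ii) of the present theorem hold and $\nu:=\bnu^{(1)}$ satisfies $m_j(\nu)\in 2\ZZ_{\geq 0}$ for all odd $j$ --- which already proves the vanishing assertion outside~(i),~(ii) --- and otherwise evaluates it through Theorem~\ref{wall} and~(\ref{ABdefn})--(\ref{ABuni}). Pulling the $s\neq 1$ data out of $a_{\bnu}$ and of $\prod_s B(s,\bnu^{(s)})$, the theorem reduces to showing that
$$S(\lambda):=\sum_{\substack{\mu,\nu\in\cP\\ m_j(\nu)\in 2\ZZ_{\geq 0}\ (j\text{ odd})}}(-1)^{\lfloor|\mu|/2\rfloor}\prod_{i=1}^{\ell(\mu)}\bigl(1-(-q)^i\bigr)\,g_{\mu\nu}^{\lambda}(-q)\,\frac{a_\nu(-q)}{B(1,\nu)}$$
equals $q^{n(\lambda)+(|\lambda|+o(\lambda))/2}c_\lambda(-1/q)$ if~(iii) holds and $0$ otherwise. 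Here I use that $n-|\lambda|$ is even whenever~(i),~(ii) hold --- because a $\ast$-fixed $s\in\tPhi_2\setminus\{\pm 1\}$ has $|s|$ even and~(ii) makes $|\blam^{(-1)}|$ even --- so that the parity constraint $|\mu|\equiv n\pmod{2}$ coming from the model sum is automatic and may be dropped.

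Next I set $t=-1/q$, so $-q=1/t$. Since the Hall polynomials $g_{\mu\nu}^{\lambda}$ are the structure constants of Lemma~\ref{HLprod}, $S(\lambda)$ is the coefficient of $t^{n(\lambda)}P_\lambda(x;t)$ in the product $G(x;t)N(x;t)$, where
$$G=\sum_{\mu\in\cP}(-1)^{\lfloor|\mu|/2\rfloor}\prod_{j=1}^{\ell(\mu)}(1-t^{-j})\,t^{n(\mu)}P_\mu(x;t),\qquad N=\sum_{\substack{\nu\in\cP\\ m_j(\nu)\in2\ZZ_{\geq0}\ (j\text{ odd})}}\frac{a_\nu(-q)}{B(1,\nu)}\,t^{n(\nu)}P_\nu(x;t).$$
For $N$: Lemma~\ref{cfactor} rewrites each coefficient and Lemma~\ref{evensign}, applied to $\nu$ (whose size is even), collapses the resulting power of $-1$, giving $\tfrac{a_\nu(-q)}{B(1,\nu)}t^{n(\nu)}=(-1)^{|\nu|/2}t^{(o(\nu)-|\nu|)/2}c_\nu(t)$; substituting $x_m\mapsto\sqrt{-1}\,x_m$ in Kawanaka's identity (Theorem~\ref{HLKa}) --- each $\nu$ there has even size, so $P_\nu(\sqrt{-1}\,x;t)=(-1)^{|\nu|/2}P_\nu(x;t)$ --- then yields $N=\prod_{i\leq j}\frac{1+x_ix_j}{1+x_ix_j/t}$. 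For $G$: from the elementary identity $(-1)^{\lfloor m/2\rfloor}=\tfrac12(1-\sqrt{-1})\sqrt{-1}^{\,m}+\tfrac12(1+\sqrt{-1})(-\sqrt{-1})^{m}$ and Lemma~\ref{HLsum1} with $y=-1/t$, applied after the substitutions $x_m\mapsto\pm\sqrt{-1}\,x_m$, one obtains $G=\tfrac12(1-\sqrt{-1})\prod_m\frac{1-\sqrt{-1}\,x_m/t}{1-\sqrt{-1}\,x_m}+\tfrac12(1+\sqrt{-1})\prod_m\frac{1+\sqrt{-1}\,x_m/t}{1+\sqrt{-1}\,x_m}$.

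Multiplying $G$ by $N$ and comparing each of the two resulting products with the right-hand side of the Fulman--Guralnick identity (Theorem~\ref{HLFG}) specialized by $x_m\mapsto\mp\sqrt{-1}\,x_m$ --- which turns its $\prod_{i\leq j}\frac{1-x_ix_j}{1-x_ix_j/t}$ factor into exactly the $N$-factor above --- I get
$$GN=\sum_{\substack{\lambda\in\cP\\ m_k(\lambda)\in2\ZZ_{\geq0}\ (k\text{ even})}}\Bigl(\tfrac12(1-\sqrt{-1})(-\sqrt{-1})^{|\lambda|}+\tfrac12(1+\sqrt{-1})\sqrt{-1}^{\,|\lambda|}\Bigr)\frac{c_\lambda(t)}{t^{(o(\lambda)+|\lambda|)/2}}P_\lambda(x;t),$$
where the bracket simplifies to $(-1)^{\lceil|\lambda|/2\rceil}$. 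Reading off the coefficient of $t^{n(\lambda)}P_\lambda$ gives $S(\lambda)=0$ unless $m_k(\lambda)\in2\ZZ_{\geq0}$ for all even $k$, i.e.\ condition~(iii), and otherwise $S(\lambda)=(-1)^{\lceil|\lambda|/2\rceil}t^{-n(\lambda)-(o(\lambda)+|\lambda|)/2}c_\lambda(t)$. Putting $t=-1/q$ and using Lemma~\ref{evensign} once more on $\lambda$ to see that $\lceil|\lambda|/2\rceil+n(\lambda)+(o(\lambda)+|\lambda|)/2\equiv|\lambda|+o(\lambda)\equiv0\pmod{2}$ turns this into $S(\lambda)=q^{n(\lambda)+(|\lambda|+o(\lambda))/2}c_\lambda(-1/q)$. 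Finally, reinstating the factored-out $s\neq 1$ data, converting $c_\lambda(-1/q)$ back to $a_\lambda(-q)/B(1,\lambda)$ via Lemma~\ref{cfactor}, and using $(-1)^{n+|\lambda|}=1$, yields $\sum_{\chi}\chi(\blam)=a_{\blam}\,q^{o(\blam^{(1)})}\big/\prod_{s\in\tPhi_2}B(s,\blam^{(s)})$, as claimed.

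I expect the main obstacle to be the sign bookkeeping: one has to track the $(-1)^{|\bnu|}$ from Theorem~\ref{wall}, the Gelfand--Graev sign $(-1)^{\lfloor k/2\rfloor}$, and the many powers of $-1$ that appear in passing between $t=-1/q$ and $q$, reconciling all of them through the three applications of Lemma~\ref{evensign} (to $\nu$, to $\lambda$, and finally at the level of sizes to see $n-|\lambda|$ even). The $\sqrt{-1}$-substitution device --- which converts the $\prod(1-x_ix_j)$-type products of Theorems~\ref{HLKa} and~\ref{HLFG} into the $\prod(1+x_ix_j)$-type product $N$ appearing here, while simultaneously installing the floor/ceiling signs attached to $|\mu|$ and $|\lambda|$ --- is the key mechanism, and verifying that these substitutions are legitimate and that the attendant identities among powers of $\sqrt{-1}$ hold is where the care lies.
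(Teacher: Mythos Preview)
Your proof is correct and follows the same overall architecture as the paper's: reduce via the model and Theorem~\ref{halldl} to a sum over $(\mu,\nu)$, interpret it as the coefficient of $t^{n(\lambda)}P_\lambda$ in a product of two Hall--Littlewood generating functions, evaluate the $\mu$-sum with Lemma~\ref{HLsum1} and the $\nu$-sum with Kawanaka's identity (Theorem~\ref{HLKa}), and then extract the $\lambda$-coefficient via Fulman--Guralnick (Theorem~\ref{HLFG}).

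The one genuine difference is in how the awkward sign $(-1)^{\lfloor|\mu|/2\rfloor}$ is dispatched. The paper uses the relation $|\mu|+|\nu|=|\lambda|$ with $|\nu|$ even to trade $(-1)^{\lfloor|\mu|/2\rfloor}$ for a global factor $(-1)^{\lfloor|\lambda|/2\rfloor+|\lambda|}$ times $(-1)^{|\mu|}(-1)^{|\nu|/2}$ inside the sum; a single \emph{real} substitution $x_i\mapsto -x_i$ in Lemma~\ref{HLsum1} then absorbs the $(-1)^{|\mu|}$, and Theorems~\ref{HLKa} and~\ref{HLFG} apply verbatim. You instead write $(-1)^{\lfloor|\mu|/2\rfloor}=\tfrac12(1-\sqrt{-1})\sqrt{-1}^{\,|\mu|}+\tfrac12(1+\sqrt{-1})(-\sqrt{-1})^{|\mu|}$ and carry the complex substitutions $x_m\mapsto\pm\sqrt{-1}\,x_m$ through all three identities, so that the floor/ceiling signs are encoded by homogeneity rather than moved by hand. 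Your route is slightly longer but more mechanical: it postpones all parity reasoning---including your clean observation that $n-|\lambda|$ is even under (i)--(ii), which the paper leaves implicit---to a single invocation of Lemma~\ref{evensign} at the very end, whereas the paper front-loads that bookkeeping. Either way the same three symmetric-function identities do the real work.
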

\begin{proof}
Let $\blam \in \cP^{\tPhi_2}_n$.  From Theorems \ref{halldl} and \ref{model}, we have
\begin{align*}
\sum_{\chi \in {\rm Irr}(U_n)} \chi(\blam) &= \sum_{k + 2l = n} \Big(\Gamma_{(k)} \circ {\rm Ind}_{Sp_{2l}}^{U_{2l}} ({\bf 1})\Big) (\blam)\\
&= \sum_{k+2l = n} \sum_{ \bmu \in \cP^{\tPhi_2}_k \atop {\bnu \in \cP^{\tPhi_2}_{2l}}} g_{\bmu \bnu}^{\blam}(-q) \Gamma_{(k)}(\bmu) {\bf 1}_{Sp_{2l}}^{U_{2l}}(\bnu)
\end{align*}
From Equation (\ref{UGGFormula}), we have $\Gamma_{(k)}(\bmu) = 0$ unless $\bmu$ is unipotent, so we may assume $\mu = \bmu^{(1)}$, where $|\mu| = k$, and $\bmu^{(s)} = \emptyset$ when $s \neq 1$.  As in (\ref{multihall}), we have,
$$ g_{\bmu \bnu}^{\blam} (-q) = \prod_{s \in \tPhi_2} g_{\bmu^{(s)} \bnu^{(s)}}^{\blam^{(s)}} ((-q)^{|s|}).$$
Since $\bmu^{(s)} = \emptyset$ when $s \neq 1$, then from (\ref{hallempty}) we have that if $s \neq 1$,
$$ g_{\bmu^{(s)} \bnu^{(s)}}^{\blam^{(s)}} ((-q)^{|s|}) = \begin{cases} 1 & \text{ if } \bnu^{(s)} = \blam^{(s)}, \\ 0 & \text{ otherwise.} \end{cases}$$
From these facts, and from the value of $\Gamma_{(k)}$ given in(\ref{UGGFormula}), we have
\begin{align*}
\sum_{\chi \in {\rm Irr}(U_n)} \chi(\blam) = \sum_{k + 2l = n} \sum_{\mu \in \cP_k} \sum_{\bnu \in \cP^{\tPhi_2}_{2l}, \bnu^{(1)} = \nu \atop { \bnu^{(s)} = \blam^{(s)}, s \neq 1}} \hspace{-.30cm} g_{\mu \nu}^{\blam^{(1)}} (-q) (-1)^{\lfloor k/2 \rfloor - \ell(\mu)} \prod_{i=1}^{\ell(\mu)} \bigl( (-q)^i -1 \bigr) {\bf 1}_{Sp_{2l}}^{U_{2l}}(\bnu).
\end{align*}
Since $|\mu| + |\nu| = |\blam^{(1)}|$ and $|\nu|$ is even in the above sum, we have
$$(-1)^{\lfloor |\blam^{(1)}|/2 \rfloor} = (-1)^{\lfloor |\mu|/2 \rfloor} (-1)^{|\nu|/2} \;\; \text{ and } (-1)^{|\blam^{(1)}|} = (-1)^{|\mu|},$$
and so we multiply the outside of the sum by the expressions on the left sides of the equations above, and the inside of the sum by the expressions on the right sides.  We have $\bnu^{(s)} = \blam^{(s)}$ when $s \neq 1$, and from Theorem \ref{permcharvals}, ${\bf 1}_{Sp_{2l}}^{U_{2l}}(\bnu)=0$ unless $\blam^{(s)} = \blam^{(s^*)}$ for $s \in \tPhi_2$, $m_i(\blam^{(-1)})$ is even when $i$ is odd, and $m_i(\bnu^{(1)})$ is even when $i$ is odd, and otherwise we have
\begin{align*}
{\bf 1}_{Sp_{2l}}^{U_{2l}}(\bnu) = \frac{a_{\bnu}}{\prod_{s \in \tPhi_2} B(s, \bnu^{(s)})}
= \frac{\prod_{s \neq 1} (-1)^{|\blam^{(s)}|\,|s|}\, a_{\blam^{(s)}}\big((-q)^{|s|}\big)}{\prod_{s \neq 1} B(s, \blam^{(s)})} \cdot \frac{a_{\nu}(-q)}{B(1, \nu)},
\end{align*}
where $\nu = \bnu^{(1)}$.  So, $\sum_{\chi \in {\rm Irr}(U_n)} \chi(\blam) = 0$ unless $\blam^{(s)} = \blam^{(s^*)}$ for $s \in \Phi$ and $m_i(\blam^{(-1)})$ is even when $i$ is odd, and otherwise we have
\begin{align*}
& \sum_{\chi \in {\rm Irr}(U_n)} \chi(\blam) = (-1)^{\lfloor |\blam^{(1)}|/2 \rfloor + |\blam^{(1)}|} \prod_{s \neq 1} (-1)^{|\blam^{(s)}|\,|s|}\, a_{\blam^{(s)}}\big((-q)^{|s|}\big) \\
&\hspace*{1.5cm} \cdot \sum_{\mu, \nu \in \cP \atop{m_k(\nu) \text{ even} \atop{\text{for $k$ odd}}}}\hspace{-.13cm}  g_{\mu \nu}^{\blam^{(1)}} (-q) (-1)^{|\mu|} \prod_{i=1}^{\ell(\mu)} \bigl( 1 - (-q)^i \bigr) \frac{(-1)^{|\nu|/2} a_{\nu}(-q)}{B(1, \nu)\prod_{s \neq 1} B(s, \blam^{(s)})}.
\end{align*}
From Lemma \ref{HLprod}, with $t = -1/q$, this expression is the coefficient of $P_{\blam^{(1)}}(x;-1/q)$ in the expansion of  
\begin{align*}
&(-q)^{n(\blam^{(1)})} (-1)^{\lfloor |\blam^{(1)}|/2 \rfloor + |\blam^{(1)}|} \prod_{s \neq 1} (-1)^{|\blam^{(s)}|\,|s|}\, a_{\blam^{(s)}}\big((-q)^{|s|}\big) \\
& \cdot \sum_{\mu \in \cP} \frac{P_{\mu}(x; -1/q)}{(-q)^{n(\mu)}} (-1)^{|\mu|} \prod_{i=1}^{\ell(\mu)} \bigl(1- (-q)^i \bigr) \hspace{-.3cm} \sum_{\nu \in \cP \atop{m_k(\nu) \text{ even} \atop{\text{for $k$ odd}}}}\frac{P_{\nu}(x; -1/q)}{(-q)^{n(\nu)}} \frac{(-1)^{|\nu|/2} a_{\nu}(-q)}{B(1, \nu)\prod_{s \neq 1} B(s, \blam^{(s)})}.
\end{align*}
By applying Lemma \ref{HLsum1}, with $t = -1/q$ and $y = q$, and replacing each $x_i$ with $-x_i$, this expression is the coefficient of $P_{\blam^{(1)}}(x;-1/q)$ in the expansion of
\begin{align*}
&(-q)^{n(\blam^{(1)})} (-1)^{\lfloor |\blam^{(1)}|/2 \rfloor + |\blam^{(1)}|} \prod_{s \neq 1} (-1)^{|\blam^{(s)}|\,|s|}\,  a_{\blam^{(s)}}\big((-q)^{|s|}\big) \, \prod_{i \geq 1} \frac{1 - x_i q}{1 + x_i}\\
&\hspace*{1.5cm} \cdot \sum_{\nu \in \cP \atop{m_k(\nu) \text{ even} \atop{\text{for $k$ odd}}}}\frac{P_{\nu}(x; -1/q)}{(-q)^{n(\nu)}} \frac{(-1)^{|\nu|/2} a_{\nu}(-q)}{B(1, \nu)\prod_{s \neq 1} B(s, \blam^{(f)})}.
\end{align*}
Applying Lemmas \ref{cfactor} and \ref{evensign}, this expression simplifies to
\begin{align*}
&(-q)^{n(\blam^{(1)})} (-1)^{\lfloor |\blam^{(1)}|/2 \rfloor + |\blam^{(1)}|} \prod_{s \neq 1} (-1)^{|\blam^{(s)}|\,|s|}\,  a_{\blam^{(s)}}\big((-q)^{|s|}\big) \, \prod_{i \geq 1} \frac{1 - x_i q}{1 + x_i}\\
&\hspace*{1.5cm} \cdot \sum_{\nu \in \cP \atop{m_k(\nu) \text{ even} \atop{\text{for $k$ odd}}}}P_{\nu}(x; -1/q) \frac{(-q)^{(|\nu| - o(\nu))/2} c_{\nu}(-1/q)}{\prod_{s \neq 1} B(s, \blam^{(s)})}.
\end{align*}
We may now apply Theorem \ref{HLKa}, with $t = -1/q$, to change the last sum into a product, which simplifies the above expression to
\begin{align*}
&(-q)^{n(\blam^{(1)})} (-1)^{\lfloor |\blam^{(1)}|/2 \rfloor + |\blam^{(1)}|} \prod_{s \neq 1} (-1)^{|\blam^{(s)}|\,|s|}\,  a_{\blam^{(s)}}\big((-q)^{|s|}\big) \\
&\hspace*{1.5cm} \cdot \prod_{i \geq 1} \frac{1 - x_i q}{1 + x_i} \prod_{i \leq j} \frac{1-x_i x_j}{1+x_i x_j q} \frac{1}{\prod_{f \neq 1} B(s, \blam^{(s)})}.\end{align*}
Finally, Theorem \ref{HLFG} with $t = -1/q$ implies that the coefficient of $P_{\blam^{(1)}}(x; -1/q)$ of the above expression is $0$ unless $m_k(\blam^{(1)})$ is even whenever $k$ is even, in which case it is
$$(-q)^{n(\blam^{(1)})} (-1)^{\lfloor |\blam^{(1)}|/2 \rfloor + |\blam^{(1)}|} \prod_{s \neq 1} (-1)^{|\blam^{(s)}|\,|s|}\,  a_{\blam^{(s)}}\big((-q)^{|s|}\big)\frac{c_{\blam^{(1)}}(-1/q) (-q)^{ (o(\blam^{(1)}) + |\blam^{(1)}|)/2}}{\prod_{s \neq 1} B(s, \blam^{(s)})}.$$
Applying Lemmas \ref{cfactor} and \ref{evensign} and simplifying gives the desired result.
\end{proof}

In the case that we are finding the sum of the characters of $U_n$ at a unipotent element, we obtain a result much like Corollary \ref{spunipotent}.

\begin{corollary} \label{unipotent}
Let $u_{\mu} \in U_n$ be a unipotent element of type $\mu$.  Unless $m_i(\mu) \in 2\ZZ_{\geq 0}$ whenever $i$ is even, $\sum_{\chi \in {\rm Irr}(U_n)} \chi(u_{\mu}) = 0$, and otherwise we have
$$\sum_{\chi \in {\rm Irr}(U_n)} \chi(u_{\mu}) = \frac{(-1)^{|\mu|} a_{\mu}(-q) q^{o(\mu)}}{B(1, \mu)} = q^{n(\mu) + (|\mu| + o(\mu))/2} c_{\mu}(-1/q).$$
\end{corollary}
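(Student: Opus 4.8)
The plan is to obtain Corollary \ref{unipotent} as a direct specialization of Theorem \ref{charvalues} to the unipotent conjugacy class, followed by one application of Lemma \ref{cfactor}. First I would note that the unipotent class of type $\mu$ in $U_n$ corresponds to the $\tPhi_2$-partition $\blam$ with $\blam^{(1)} = \mu$ and $\blam^{(s)} = \emptyset$ for all $s \neq 1$. For such a $\blam$, conditions (i) and (ii) of Theorem \ref{charvalues} are vacuously satisfied: every $\blam^{(s)}$ with $s \neq \pm 1$ is empty (and $s^* \neq 1$ in that case), $\blam^{(-1)} = \emptyset$ so all its multiplicities are zero hence even, and $\blam^{(1)^*} = \blam^{(1)}$ since $1^* = 1$. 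Condition (iii) then reads exactly $m_i(\mu) \in 2\ZZ_{\geq 0}$ whenever $i$ is even, which is the hypothesis in the Corollary; when it fails the sum is $0$, matching the claim.

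Next I would simplify the formula in Theorem \ref{charvalues} under this specialization. Using the definition $a_{\blam} = (-1)^{|\blam|}\prod_{s}a_{\blam^{(s)}}((-q)^{|s|})$ from Theorem \ref{wall}, together with the fact that $a_{\emptyset}(x) = 1$ (the formula $a_\mu(x) = x^{|\mu|+2n(\mu)}\prod_i\prod_{j=1}^{m_i}(1-(1/x)^j)$ gives empty products for $\mu = \emptyset$), one gets $a_{\blam} = (-1)^{|\mu|} a_{\mu}(-q)$. Likewise $B(s,\emptyset) = 1$ directly from (\ref{ABdefn}) and (\ref{ABuni}), so $\prod_{s \in \tPhi_2} B(s,\blam^{(s)}) = B(1,\mu)$. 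Finally $o(\blam^{(1)}) = o(\mu)$. Substituting these into the formula of Theorem \ref{charvalues} yields
$$\sum_{\chi \in {\rm Irr}(U_n)} \chi(u_{\mu}) = \frac{(-1)^{|\mu|} a_{\mu}(-q)\, q^{o(\mu)}}{B(1,\mu)},$$
which is the first displayed expression in the Corollary.

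For the second equality, I would invoke Lemma \ref{cfactor}, which gives $a_{\mu}(-q)/B(1,\mu) = (-1)^{|\mu|} q^{n(\mu) + (|\mu|-o(\mu))/2} c_{\mu}(-1/q)$. Multiplying through by $(-1)^{|\mu|} q^{o(\mu)}$ turns the exponent $n(\mu) + (|\mu|-o(\mu))/2 + o(\mu)$ into $n(\mu) + (|\mu|+o(\mu))/2$ and cancels the sign, producing $q^{n(\mu) + (|\mu|+o(\mu))/2} c_{\mu}(-1/q)$, as claimed. There is essentially no obstacle here: the only points requiring a line of verification are that the three conditions of Theorem \ref{charvalues} collapse correctly on a unipotent class and that the empty-partition factors $a_{\emptyset}(\cdot)$ and $B(s,\emptyset)$ equal $1$; everything else is bookkeeping with exponents of $q$ and the two auxiliary lemmas already established.
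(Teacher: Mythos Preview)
Your proposal is correct and follows exactly the approach of the paper, which states only that the result ``follows immediately from Theorem \ref{charvalues} and Lemma \ref{cfactor}.'' You have simply supplied the routine verifications (that conditions (i)--(iii) collapse appropriately on a unipotent class, that the empty-partition factors are $1$, and the exponent bookkeeping) that the paper leaves implicit.
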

\begin{proof} This follows immediately from Theorem \ref{charvalues} and Lemma \ref{cfactor}.
\end{proof}

\section{Probabilities and Frobenius-Schur indicators} \label{probs}

Here we reinterpret the main results in Sections \ref{permchar} and \ref{fullsum} as probabilistic statements, as the corresponding results for ${\rm GL}(n, \FF_q)$ are given in \cite{fulgural}.  For the result in Theorem \ref{permcharvals}, we have the following statement.

\begin{corollary} \label{spprop} 
Let $q$ be odd.  The probability that a uniformly randomly selected element of ${\rm Sp}(2n, \FF_q)$ belongs to the conjugacy class in ${\rm U}(2n, \FF_{q^2})$ corresponding to $\bnu \in \cP^{\tPhi_2}_{2n}$ is $0$ unless $\bnu^{(s)} = \bnu^{(s^*)}$ for every $s \in \tPhi_2$, and $m_j(\bnu^{(1)})$ and $m_j(\bnu^{(-1)})$ are even for every odd $j \geq 1$, in which case it is equal to
$$ \frac{1}{\prod_{s \in \tPhi_2} B(s, \bnu^{(s)})}.$$
\end{corollary}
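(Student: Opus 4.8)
The plan is to derive Corollary~\ref{spprop} as a direct translation of Theorem~\ref{permcharvals} into probabilistic language, using the standard relationship between induced characters, conjugacy class sizes, and uniform probabilities. First I would observe that, for an element $h$ chosen uniformly at random from $Sp_{2n}$, the probability that $h$ lies in the $U_{2n}$-conjugacy class indexed by $\bnu$ equals
$$\frac{\big|\{h \in Sp_{2n} \mid h \sim g_{\bnu} \text{ in } U_{2n}\}\big|}{|Sp_{2n}|},$$
where $g_{\bnu}$ is any fixed element of that $U_{2n}$-class. Comparing with the second expression for ${\bf 1}_{Sp_{2n}}^{U_{2n}}(\bnu)$ in equation~(\ref{induced}), namely ${\bf 1}_{Sp_{2n}}^{U_{2n}}(\bnu) = \tfrac{|C_{U_{2n}}(g_{\bnu})|}{|Sp_{2n}|}\,\big|\{h \in Sp_{2n} \mid h \sim g_{\bnu} \text{ in } U_{2n}\}\big|$, this probability is exactly ${\bf 1}_{Sp_{2n}}^{U_{2n}}(\bnu) / |C_{U_{2n}}(g_{\bnu})| = {\bf 1}_{Sp_{2n}}^{U_{2n}}(\bnu)/a_{\bnu}$.

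Next I would invoke Theorem~\ref{permcharvals} directly. That theorem tells us ${\bf 1}_{Sp_{2n}}^{U_{2n}}(\bnu) = 0$ precisely when conditions (i) and (ii) of that theorem fail --- i.e.\ when it is not the case that $\bnu^{(s)} = \bnu^{(s^*)}$ for all $s \in \tPhi_2$ and $m_j(\bnu^{(1)}), m_j(\bnu^{(-1)}) \in 2\ZZ_{\geq 0}$ for all odd $j$ --- which gives the vanishing clause of the corollary. When (i) and (ii) hold, Theorem~\ref{permcharvals} gives
$${\bf 1}_{Sp_{2n}}^{U_{2n}}(\bnu) = \frac{a_{\bnu}}{\prod_{s \in \tPhi_2} B(s, \bnu^{(s)})},$$
so dividing by $a_{\bnu}$ yields the claimed value $1/\prod_{s \in \tPhi_2} B(s, \bnu^{(s)})$. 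This completes the argument.

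The only genuine content beyond citing Theorem~\ref{permcharvals} is justifying the first displayed identity, that the uniform probability in question equals ${\bf 1}_{Sp_{2n}}^{U_{2n}}(\bnu)/a_{\bnu}$; but this is immediate from the induced character formula~(\ref{induced}) together with Theorem~\ref{wall} identifying $|C_{U_{2n}}(g_{\bnu})| = a_{\bnu}$. I do not anticipate any real obstacle here --- the corollary is essentially a restatement, and the main subtlety is purely bookkeeping: making sure the set $\{h \in Sp_{2n} \mid h \sim g_{\bnu}\}$ is interpreted consistently (it is either empty, precisely when (i) or (ii) fails, or a union of $Sp_{2n}$-classes otherwise) and that the division by $a_{\bnu}$ is carried out correctly. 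One should also remark that the probabilities over all valid $\bnu$ sum to $1$, which follows since the corresponding $U_{2n}$-classes that meet $Sp_{2n}$ partition $Sp_{2n}$, though this consistency check is optional for the statement as given.
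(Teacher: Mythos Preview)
Your proposal is correct and follows essentially the same route as the paper's own proof: both arguments observe via equation~(\ref{induced}) and Theorem~\ref{wall} that the desired probability equals ${\bf 1}_{Sp_{2n}}^{U_{2n}}(\bnu)/a_{\bnu}$, and then read off the result directly from Theorem~\ref{permcharvals}. The additional remarks you include (about the set being a union of $Sp_{2n}$-classes, and the probabilities summing to $1$) are not in the paper's proof but are harmless commentary.
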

\begin{proof}
From Equation (\ref{induced}), we have, for any $g \in U_{2n}$,
$$ {\bf 1}_{Sp_{2n}}^{U_{2n}}(g) = \frac{|C_{U_{2n}}(g)|}{|Sp_{2n}|} \big| \{ h \in Sp_{2n} \, \mid \, h \sim g \text{ in } U_{2n} \} \big|.$$
Taking $g$ to be in the conjugacy class corresponding to $\bnu \in \cP^{\tPhi_2}_{2n}$, we have $|C_{U_{2n}}(g)| = a_{\bnu}$ from Theorem \ref{wall}.  So, the probability we want is exactly
$$ \frac{{\bf 1}_{Sp_{2n}}^{U_{2n}}(\bnu)}{a_{\bnu}} = \frac{\big| \{ h \in Sp_{2n} \, \mid \, h \sim g \text{ in } U_{2n} \} \big|}{|Sp_{2n}|}. $$
The result now follows directly from Theorem \ref{permcharvals}. \end{proof}

Interpreting the result for the full character sum in Theorem \ref{charvalues} as a probabilistic statement is most easily accomplished using the twisted Frobenius-Schur indicator, originally defined in \cite{kawmat} and further studied in \cite{bumpginz}.  Let $G$ be a finite group, with automorphism $\iota$, which either has order 2 or is the identity.  Let $(\pi, V)$ be an irreducible complex representation of $G$, and suppose that ${^\iota \pi} \cong \hat{\pi}$, where $\hat{\pi}$ is the contragredient representation of $\pi$ and ${^\iota \pi}$ is defined by ${^\iota \pi}(g) = \pi({^\iota g})$.  The equivalence of the representations ${^\iota \pi}$ and $\hat{\pi}$ implies that there exists a nondegenerate bilinear form
$$ \langle \cdot, \cdot \rangle: V \times V \rightarrow \CC, $$
unique up to scalar by Schur's Lemma, such that
$$ \langle \pi(g)v, {^\iota \pi}(g)w \rangle = \langle v, w \rangle,$$
for every $g \in G$, $v, w \in V$.

Because the bilinear form is unique up to scalar, we have
$$\langle v, w \rangle = \varepsilon_{\iota}(\pi) \langle w, v \rangle,$$
where $\varepsilon_{\iota}(\pi) = \pm 1$ depends only on $\iota$ and $\pi$.  If ${^\iota \pi} \not\cong \hat{\pi}$, we define $\epi(\pi) = 0$.  Then $\epi(\pi)$ is the {\em twisted Frobenius-Schur indicator} of $(\pi, V)$ with respect to $\iota$.  If $\chi$ is the character corresponding to $\pi$, we also write $\epi(\pi) = \epi(\chi)$.  The following results are proven in \cite{kawmat} and \cite{bumpginz}, and show that the twisted Frobenius-Schur indicator indeed generalizes the classical Frobenius-Schur indicator, which is the case that $\iota$ is trivial.

\begin{theorem} \label{fsprops}
Let $G$ be a finite group with automorphism $\iota$ such that $\iota^2$ is the identity.  Then we have the formulas 
$$\epi(\chi) = \frac{1}{|G|} \sum_{g \in G} \chi(g\; {^\iota g}), \;\; \text{ and } \;\; \sum_{\chi \in {\rm Irr}(G)} \epi(\chi)\chi(g) = \big|\{ h \in G \; | \; h\;{^\iota h} = g \} \big|.$$
\end{theorem}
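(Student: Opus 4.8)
The plan is to establish the first identity by a direct application of the Schur orthogonality relations for matrix coefficients of an irreducible representation, and then to obtain the second identity by expanding the integer-valued function $g\mapsto\big|\{h\in G\mid h\,{}^{\iota}h=g\}\big|$ (which will be seen to be a class function) in the basis $\Irr(G)$ and reading off its coefficients from the first identity. The special case $\iota=\mathrm{id}$ recovers the classical Frobenius--Schur identities, so the only new content is carrying the twist through the computation.

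For the first identity, fix an irreducible representation $(\pi,V)$ with character $\chi$ and put $n=\dim V$. In a basis of $V$ one has $\chi(g\,{}^{\iota}g)=\tr(\pi(g)\,{}^{\iota}\pi(g))=\sum_{i,j}\pi(g)_{ij}\,{}^{\iota}\pi(g)_{ji}$, and since $\hat{\rho}(h)_{ij}=\rho(h^{-1})_{ji}$ for any representation $\rho$, this equals $\sum_{i,j}\pi(g)_{ij}\,\widehat{{}^{\iota}\pi}(g^{-1})_{ij}$. Averaging over $g$ and invoking Schur orthogonality for the irreducible representations $\pi$ and $\widehat{{}^{\iota}\pi}$ gives $0$ unless $\pi\cong\widehat{{}^{\iota}\pi}$; taking contragredients, that isomorphism is equivalent to ${}^{\iota}\pi\cong\hat{\pi}$, so in the excluded case $\epi(\chi)=0$ by definition and we are done. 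When ${}^{\iota}\pi\cong\hat{\pi}$, I would use the nondegenerate invariant form $\langle\cdot,\cdot\rangle$ with Gram matrix $K$: the invariance $\langle\pi(g)v,{}^{\iota}\pi(g)w\rangle=\langle v,w\rangle$ reads ${}^{t}\pi(g)\,K\,{}^{\iota}\pi(g)=K$, i.e. ${}^{\iota}\pi(g)=K^{-1}\,{}^{t}\pi(g^{-1})\,K$; substituting $g\mapsto{}^{\iota}g$ and using $\iota^{2}=\mathrm{id}$ shows $K^{-1}\,{}^{t}K$ centralizes $\pi(G)$, hence is a scalar by Schur's lemma, while the defining relation $\langle v,w\rangle=\epi(\pi)\langle w,v\rangle$ forces $K=\epi(\pi)\,{}^{t}K$. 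Then $\tfrac{1}{|G|}\sum_{g}\chi(g\,{}^{\iota}g)=\tfrac{1}{|G|}\sum_{g}\tr\!\big(K\pi(g)K^{-1}\,{}^{t}\pi(g^{-1})\big)$, and writing this out in coordinates and applying $\tfrac{1}{|G|}\sum_{g}\pi(g)_{ab}\pi(g^{-1})_{cd}=\tfrac{1}{n}\delta_{ad}\delta_{bc}$ collapses the sum to $\tfrac{1}{n}\tr\!\big(K\,({}^{t}K)^{-1}\big)=\epi(\pi)$, as desired.

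For the second identity, set $N(g)=\big|\{h\in G\mid h\,{}^{\iota}h=g\}\big|$. For each fixed $x\in G$, the map $h\mapsto x^{-1}h\,{}^{\iota}x$ is a bijection of $G$ that carries the solution set of $h\,{}^{\iota}h=g$ onto that of $h\,{}^{\iota}h=x^{-1}gx$ (using $\iota^{2}=\mathrm{id}$ together with the fact that $\iota$ is a homomorphism), so $N$ is a class function. Expanding $N=\sum_{\chi\in\Irr(G)}c_{\chi}\,\chi$, the coefficient is $c_{\chi}=\tfrac{1}{|G|}\sum_{g}N(g)\overline{\chi(g)}=\tfrac{1}{|G|}\sum_{h\in G}\overline{\chi(h\,{}^{\iota}h)}=\overline{\epi(\chi)}=\epi(\chi)$, where the third equality is the first identity and the last uses that $\epi(\chi)\in\{0,\pm1\}$ is real. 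Hence $N=\sum_{\chi\in\Irr(G)}\epi(\chi)\chi$, which is the claim.

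I expect the only genuinely delicate step to be the isomorphic case of the first identity: keeping straight the relationship between the invariant bilinear form, its Gram matrix $K$, the contragredient representation, and the sign $\epi(\pi)$, and in particular verifying that the ``apply $\iota$ twice'' argument really pins the scalar down as $\epi(\pi)$ (the ambiguity $\epi(\pi)=\epi(\pi)^{-1}$ being harmless). The vanishing case, the class-function verification, and the final expansion are routine applications of Schur's lemma and the orthogonality relations.
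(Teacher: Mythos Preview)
Your argument is correct. The computation in the isomorphic case is carried out cleanly: rewriting ${}^{\iota}\pi(g)=K^{-1}\,{}^{t}\pi(g^{-1})\,K$ and then applying the orthogonality relation $\tfrac{1}{|G|}\sum_{g}\pi(g)_{ab}\pi(g^{-1})_{cd}=\tfrac{1}{n}\delta_{ad}\delta_{bc}$ for $\pi$ with itself avoids the usual pitfall of invoking Schur orthogonality between two representations that are isomorphic but not literally equal. The class-function check for $N(g)$ via $h\mapsto x^{-1}h\,{}^{\iota}x$ is right, and the passage from $\overline{\epi(\chi)}$ to $\epi(\chi)$ is justified since the indicator lies in $\{0,\pm 1\}$.

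As to comparison with the paper: there is nothing to compare. The paper does not prove Theorem~\ref{fsprops}; it quotes the result from Kawanaka--Matsuyama and Bump--Ginzburg and uses it as a black box. Your write-up therefore supplies a self-contained proof where the paper offers only a citation. For what it is worth, your argument is essentially the standard one (and is what one finds in those references): the first formula via matrix-coefficient orthogonality and the Gram matrix of the invariant form, the second by expanding the counting function in irreducible characters.
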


Now consider the case $G = {\rm U}(n, \FF_{q^2})$ and $\iota$ the transpose inverse automorphism.  In \cite{TV05}, it is proven that $\epi(\pi) = 1$ for every irreducible complex representation $(\pi, V)$ of ${\rm U}(n, \FF_{q^2})$.  By Theorem \ref{fsprops}, this is equivalent to \cite[Corollary 5.2]{TV05}
\begin{equation} \label{degreesum}
\sum_{\chi \in {\rm Irr}({\rm U}(n, \FF_{q^2}))} \chi(1) = \big| \{ g \in {\rm U}(n, \FF_{q^2}) \; | \; g \text{ symmetric} \} \big|.
\end{equation}
We conclude with the following probabilistic version of Theorem \ref{charvalues}.

\begin{corollary} \label{fullprob}
Let $q$ be odd.  Let $u$ be a uniformly randomly selected element of ${\rm U}(n, \FF_{q^2})$, and let $\iota$ be the transpose inverse automorphism of ${\rm U}(n, \FF_{q^2})$.  The probability that $u \, {^\iota u}$ is in the conjugacy class corresponding to $\bnu \in \cP^{\tPhi_2}_n$ is $0$ unless $\bnu^{(s)} = \bnu^{(s^*)}$ for every $s \in \tPhi_2$, $m_j(\bnu^{(1)})$ is even for every even $j$,  and $m_j(\bnu^{(-1)})$ is even for every odd $j$, in which case it is equal to 
$$\frac{q^{o(\blam^{(1)})}}{\prod_{s \in \tPhi_2} B(s, \blam^{(s)})}.$$
\end{corollary}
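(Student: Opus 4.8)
The plan is to deduce this from the twisted Frobenius--Schur formalism of Theorem~\ref{fsprops} together with the character-sum evaluation of Theorem~\ref{charvalues}. First I would recall the result of \cite{TV05} that, for $G = U_n = \U(n,\FF_{q^2})$ and $\iota$ the transpose-inverse automorphism, every $\chi \in \Irr(U_n)$ satisfies $\epi(\chi) = 1$. Substituting this into the second identity of Theorem~\ref{fsprops} yields
$$\sum_{\chi \in \Irr(U_n)} \chi(g) = \big|\{ h \in U_n \mid h\,{^\iota h} = g \}\big| \qquad \text{for every } g \in U_n .$$
In particular the right-hand side, being a sum of class functions, depends only on the $U_n$-conjugacy class of $g$.

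Next I would pass from counting $h$ with $h\,{^\iota h} = g$ to counting $h$ with $h\,{^\iota h}$ lying in a prescribed class. Let $C$ be the conjugacy class of $U_n$ indexed by $\bnu \in \cP^{\tPhi_2}_n$ and fix a representative $g_{\bnu} \in C$. By the previous paragraph the quantity $\big|\{ h \in U_n \mid h\,{^\iota h} = g' \}\big|$ takes the common value $\sum_{\chi} \chi(\bnu)$ for every $g' \in C$, so
$$\big|\{ h \in U_n \mid h\,{^\iota h} \in C \}\big| = |C| \sum_{\chi \in \Irr(U_n)} \chi(\bnu) = \frac{|U_n|}{a_{\bnu}} \sum_{\chi \in \Irr(U_n)} \chi(\bnu),$$
using $|C| = |U_n|/|C_{U_n}(g_{\bnu})|$ and $|C_{U_n}(g_{\bnu})| = a_{\bnu}$ from Theorem~\ref{wall}. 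Dividing by $|U_n|$, the probability that a uniform $u \in U_n$ has $u\,{^\iota u} \in C$ equals $a_{\bnu}^{-1}\sum_{\chi} \chi(\bnu)$.

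Finally I would insert Theorem~\ref{charvalues}. The vanishing conditions there, namely $\blam^{(s)} = \blam^{(s^*)}$ for all $s$, $m_i(\blam^{(-1)}) \in 2\ZZ_{\geq 0}$ for $i$ odd, and $m_i(\blam^{(1)}) \in 2\ZZ_{\geq 0}$ for $i$ even, are precisely the hypotheses stated in the corollary, so the probability is $0$ outside of them. When they hold, Theorem~\ref{charvalues} gives $\sum_{\chi} \chi(\bnu) = a_{\bnu}\, q^{o(\bnu^{(1)})}/\prod_{s \in \tPhi_2} B(s, \bnu^{(s)})$; the factor $a_{\bnu}$ cancels against $a_{\bnu}^{-1}$, leaving $q^{o(\bnu^{(1)})}/\prod_{s \in \tPhi_2} B(s, \bnu^{(s)})$, exactly the asserted value.

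I do not anticipate a serious obstacle, since the analytic work is already contained in Theorem~\ref{charvalues} and the input $\epi(\chi) = 1$ is quoted from \cite{TV05}. The only point needing a word of justification is the step turning the count of $h$ with $h\,{^\iota h} = g$ supplied by Theorem~\ref{fsprops} into the count of $h$ with $h\,{^\iota h}$ in the conjugacy class of $g$ relevant to the probability; this is immediate from the observation that the left-hand side of the Frobenius--Schur identity is constant on conjugacy classes.
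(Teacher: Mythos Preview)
Your proposal is correct and follows essentially the same route as the paper: invoke $\epi(\chi)=1$ from \cite{TV05}, apply the second identity of Theorem~\ref{fsprops} to get $\sum_\chi \chi(\bnu) = |\{h : h\,{^\iota h}=g\}|$, pass to the full conjugacy class by multiplying by $|C|=|U_n|/a_{\bnu}$ and dividing by $|U_n|$, and then plug in Theorem~\ref{charvalues}. Your write-up is in fact slightly more careful than the paper's in spelling out why the Frobenius--Schur count is constant on the class, but the argument is the same.
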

\begin{proof}  Let $g \in U_{2n}$ be an element from the conjugacy class corresponding to $\bnu$.  From Theorem \ref{fsprops}, and the fact that $\epi(\pi) = 1$ for every irreducible representation $\pi$ of $U_n$, we have
$$ \sum_{\chi \in {\rm Irr}(U_n)} \chi(\bnu) = \big| \{ u \in U_n \; | \; u\, {^\iota u} = g \} \big|.$$
We need to count each set on the right as $g$ ranges over the conjugacy class corresponding to $\bnu$, which has size $|U_{2n}|/a_{\bnu}$ by Theorem \ref{wall}.  Multiplying by this quantity, and dividing by $|U_{2n}|$ to get a probability, we find that the desired probability is
$$ \frac{\big| \{ u \in U_n \; | \; u\, {^\iota u} = g \} \big|}{a_{\bnu}}= \frac{\sum_{\chi \in {\rm Irr}(U_n)} \chi(\bnu)}{a_{\bnu}}.$$
The result now follows from Theorem \ref{charvalues}.
\end{proof}


\begin{thebibliography}{10}

\bibitem{bumpginz}
D.~Bump and D.~Ginzburg, Generalized Frobenius-Schur numbers, \emph{J. Algebra}  \textbf{278} (2004), no. 1, 294--313.


\bibitem{Ca85}
R.~Carter, Finite groups of Lie type: conjugacy classes and complex characters.  John Wiley and Sons, 1985.



\bibitem{dellusz}
P.~Deligne and G.~Lusztig, Representations of reductive groups over finite fields, \emph{Ann. of Math.} (2) \textbf{103} (1976), no. 1, 103--161.



\bibitem{dignemichel}
F.~Digne and J.~Michel, Foncteurs de Lusztig et caract\`eres des groupes lin\'eaires et unitaires sur un corps fini, \emph{J. Algebra} \textbf{107} (1987), no. 1, 217--255.



\bibitem{ennolaconj}
V.~Ennola, On the conjugacy classes of the finite unitary groups, \emph{Ann. Acad. Sci. Fenn. Ser. A I No.} \textbf{313} (1962), 13 pages.



\bibitem{ennola}
V.~Ennola, On the characters of the finite unitary groups, \emph{Ann. Acad. Sci. Fenn. Ser. A I No.} \textbf{323} (1963), 35 pages.


\bibitem{fulconj}
J.~Fulman, A probabilistic approach to conjugacy classes in the finite symplectic and orthogonal groups, \emph{J. Algebra} {\bf 234} (2000), 207--224. 


\bibitem{fulgural}
J.~Fulman and R.~Guralnick,  Conjugacy class properties of the extension of ${\rm GL}(n, q)$ generated by the inverse transpose involution, \emph{J. Algebra} \textbf{275} (2004), no. 1, 356--396. 


\bibitem{grove}
L. C.~Grove, Classical Groups and Geometric Algebra, Graduate Studies in Mathematics, Volume 39, American Mathematical Society, 2002.


\bibitem{hender}
A.~Henderson, Symmetric subgroup invariants in irreducible representations of $G^F$, when $G = GL_n$, \emph{J. Algebra} \textbf{261} (2003), no. 1, 102--144.



\bibitem{howzwor}
R. B.~Howlett and C.~Zworestine, On Klyachko's model for the representations of finite general linear groups, In \emph{Representations and quantizations (Shanghai, 1998)}, pages 229--245, China High. Educ. Press, Beijing, 2000.

\bibitem{inglissaxl}
N. F. J. Inglis and J. Saxl, An explicit model for the complex representations of the finite general linear groups, \emph{Arch. Math. (Basel)} \textbf{57} (1991), no. 5, 424--431.

\bibitem{kawanaka}
N.~Kawanaka, Symmetric spaces over finite fields, Frobenius-Schur indices, and symmetric function identities, In \emph{Physics and Combinatorics 1999 (Nagoya)}, pages 70--84, World Sci. Publ., River Edge, NJ, 2001.

\bibitem{kawmat}
N.~Kawanaka and H.~Matsuyama, A twisted version of the Frobenius-Schur indicator and multiplicity-free permutation representations, \emph{Hokkaido Math. J.} \textbf{19} (1990), no. 3, 495--508.

\bibitem{kl}
A. A.~Klyachko, Models for complex representations of the groups ${\rm GL}(n,\,q)$, \emph{Mat. Sb. (N.S.)} \textbf{120(162)} (1983), no. 3, 371--386.


\bibitem{luszsrin}
G.~Lusztig and B.~Srinivasan, The characters of the finite unitary groups, \emph{J. Algebra} \textbf{49} (1977), no. 1, 167--171.



\bibitem{Mac}
I. G.~Macdonald, Symmetric functions and Hall polynomials.  Second edition.  With Contributions by A. Zelevinsky.  Oxford Mathematical Monographs.  Oxford Science Publications.  The Clarendon Press, Oxford University Press, New York, 1995.



\bibitem{ohm}
Z.~Ohmori, On a Zelevinsky theorem and the Schur indices of the finite unitary groups, \emph{J. Math. Sci. Univ. Tokyo} \textbf{2} (1997), no. 2, 417--433.




\bibitem{St67} R.~Steinberg, Lectures on Chevalley groups, mimeographed notes,  Yale University, 1968.



\bibitem{TV05}
N.~Thiem and C. R.~Vinroot, On the characteristic map of finite unitary groups, \emph{Adv. Math.} \textbf{210} (2007), no. 2, 707--732.


\bibitem{TV06}
N.~Thiem and C. R.~Vinroot, Gelfand-Graev characters of the finite unitary groups, Preprint (2006), submitted for publication.


\bibitem{wall}
G. E.~Wall, On the conjugacy classes in the unitary, orthogonal and symplectic groups, \emph{J. Austral. Math. Soc.} \textbf{3} (1962), 1--62.


\end{thebibliography}
\end{document}